\newtheorem{theorem}{Theorem}[section]
\newtheorem{corollary}[theorem]{Corollary}
\newtheorem{lemma}[theorem]{Lemma}
\newtheorem{proposition}[theorem]{Proposition}
\theoremstyle{definition}
\newtheorem{conjecture}[theorem]{Conjecture}
\newtheorem{definition}[theorem]{Definition}
\newtheorem{remark}[theorem]{Remark}
\numberwithin{equation}{subsection}
\newtheorem*{ack}{Acknowledgement}
\newcommand{\Ker}{\operatorname{Ker}}
\newcommand{\As}{\operatorname{As}}
\newcommand{\Conj}{\operatorname{Conj}}
\newcommand{\C}{\operatorname{C}}
\newcommand{\Inn}{\operatorname{Inn}}
\newcommand{\Z}{\operatorname{Z}}
\begin{document}
\title{Link quandles are residually finite}

\author{Valeriy G. Bardakov}
\author{Mahender Singh}
\author{Manpreet Singh}

\address{Sobolev Institute of Mathematics, 4 Acad. Koptyug avenue, 630090, Novosibirsk, Russia.}
\address{Novosibirsk State  University, 2 Pirogova Street, 630090, Novosibirsk, Russia.}
\address{Novosibirsk State Agrarian University, Dobrolyubova street, 160, Novosibirsk, 630039, Russia.}
\email{bardakov@math.nsc.ru}

\address{Department of Mathematical Sciences, Indian Institute of Science Education and Research (IISER) Mohali, Sector 81,  S. A. S. Nagar, P. O. Manauli, Punjab 140306, India.}
\email{mahender@iisermohali.ac.in}
\email{manpreetsingh@iisermohali.ac.in}

\subjclass[2010]{Primary 57M25; Secondary 20E26, 57M05, 20N05}
\keywords{Free product, free quandle, irreducible 3-manifold, link quandle, residually finite quandle}

\begin{abstract}
Residual finiteness is known to be an important property of groups appearing in combinatorial group theory and low dimensional topology. In a recent work \cite{Bardakov-Singh-Singh} residual finiteness of quandles was introduced, and it was proved that free quandles and knot quandles are residually finite. In this paper, we extend these results and prove that free products of residually finite quandles are residually finite provided their associated groups are residually finite. As associated groups of link quandles are link groups, which are known to be residually finite, it follows that link quandles are residually finite.
\end{abstract}

\maketitle

\section{Introduction}
A quandle is a non-empty set with a binary operation that satisfies axioms modelled on the three Reidemeister moves of diagrams of knots in $\mathbb{S}^3$. These objects first appeared in the work of Joyce \cite{Joyce} under the name  quandle, and that of Matveev \cite{Matveev} under the name distributive groupoid. They independently proved that one can associate a quandle $Q(L)$ to each tame link $L$ that is an invariant of links. Further, if $L_1$ and $L_2$ are two non-split tame links with $Q(L_1) \cong Q(L_2)$, then there is a homeomorphism of $\mathbb{S}^3$ mapping $L_1$ onto $L_2$, not necessarily preserving the orientations of the ambient space. Besides knot theory, quandles have shown appearance in various areas of mathematics, and have been a subject of intensive investigation in recent years. The reader is referred to the survey articles \cite{Carter, Kamada, Nelson} for more on recent developments in the subject.
\vspace*{.3mm}

Although link quandles are good invariant for tame links, it is usually difficult to check whether two quandles are isomorphic.  This motivates search for newer properties of quandles, particularly, of link quandles. It is well-known that residual finiteness and other residual properties play a crucial role in combinatorial group theory and low dimensional topology. Investigation of residual finiteness of link groups, in general 3-manifold groups, has been of interest for a long time. Neuwirth \cite{Neuwirth} showed that knot groups of fibered knots are residually finite. Mayland \cite{Mayland} extended the result to groups of twist knots, and Stebe \cite{Stebe} to certain class of non-fibered knots. As a consequence of the proof of the geometrization conjecture due to Perelman \cite{Perelman1, Perelman2, Perelman3}, all finitely generated 3-manifold groups, in particular link groups, have been shown to be  residually finite \cite{Hempel}. See the recent memoir \cite{Aschenbrenner-Friedl} for more on this theme. Since groups are rich sources of quandles, many ideas from group theory have been brought to the realm of quandles. This motivated the recent work \cite{Bardakov-Singh-Singh}, where we initiated study of residual finiteness of quandles. It was proved that free quandles and knot quandles of tame knots are residually finite. However, residual finiteness of link quandles remained unsettled. The purpose of this paper is to prove that link quandles of tame links are also residually finite. For non-split links this is established by extending arguments of \cite{Bardakov-Singh-Singh} and using a result of Long and Niblo \cite{Long-Niblo} on finite separability of $\pi_1(S, p)$ in $\pi_1(M, p)$, where $M$ is an orientable irreducible compact 3-manifold and $S$ an incompressible connected subsurface of a component of the boundary of $M$ containing the
base point p. For split links we first prove that  free products of residually finite quandles are residually finite provided their associated groups are residually finite. The result then follows by observing that link quandles of arbitrary links are free products of quandles of their non-split components, and that associated groups of link quandles are the corresponding link groups. As a consequence, we deduce that link quandles are Hopfian and have solvable word problem.
\vspace*{.3mm}

The paper is organised as follows. We begin by setting the necessary background in Section \ref{prelim}. In Section \ref{sec-non-split}, we prove that link quandles of non-split links are residually finite (Theorem \ref{non-split-link-quandle-is-residually-finite}). In Section \ref{sec-split-link}, we prove that link quandles of split links are residually finite (Theorem \ref{split-link-res-finite}). This is achieved by first proving that free products of residually finite quandles are residually finite if their associated groups are residually finite (Theorem \ref{arbitrary-free-product-of-residually-finite-quandles}). Finally, in Section \ref{concluding-remarks}, using a recent result of Bardakov and Nasybullov \cite{Bardakov-Nasybullov} on embedding of free products of quandles into their enveloping groups, we give a short proof of Theorem \ref{split-link-res-finite} for links with untangled components where each component is a prime knot. We prove that associated groups of finite quandles are residually finite (Proposition \ref{ass-group-finite}). Since associated groups of free quandles and link quandles are also residually finite, we conclude with a conjecture (\ref{residually-finite-associated-group}) that associated group of any finitely presented residually finite quandle is residually finite.
\vspace*{.3mm}
Throughout the paper, all knots and links are tame.
\bigskip

\section{Basic definitions and examples}\label{prelim}
We begin with the definition of a quandle.
\vspace*{.3mm}

\begin{definition}
A {\it quandle} is a non-empty set $X$ with a binary operation $(x,y) \mapsto x * y$ satisfying the following axioms:
\begin{enumerate}
\item $x*x=x$ for all $x \in X$;
\item For any $x,y \in X$ there exists a unique $z \in X$ such that $x=z*y$;
\item $(x*y)*z=(x*z) * (y*z)$ for all $x,y,z \in X$.
\end{enumerate}
\end{definition}

\vspace*{.3mm}
A non-empty set with a binary operation satisfying only the axioms (2) and (3) is called a {\it rack}. Obviously, every quandle is a rack, but not conversely. A quandle $X$ is called {\it trivial} if $x*y=x$ for all $x, y \in X$. A trivial quandle can contain arbitrary number of elements. Note that the quandle axioms are equivalent to saying that for each $x \in X$, the map $S_x : X \to X$ given by $S_x(y) = y *x$ is an automorphism of $X$ fixing $x$. These maps are referred as {\it inner automorphisms} of $X$, and the group generated by them is denoted by $\Inn(X)$. The second quandle axiom is equivalent to saying that there exists dual binary operation on $X$, written as $(x, y) \mapsto x \ast^{-1} y$, and satisfying $$x \ast y = z~\textrm{if and only if}~ x = z \ast^{-1} y$$ for all $x, y, z \in X$.
\vspace*{.3mm}

Although links are rich sources of quandles, many interesting examples of quandles come from groups.
\begin{itemize}
\item A group $G$ equipped with the binary operation $a*b= b^{-1} a b$ gives a quandle structure on $G$, called the {\it conjugation quandle}, and denoted $\Conj(G)$.

\item If $G$ is a group, $z$ an element of $G$ and $H$ a subgroup of the centralizer $\C_G(z)$ of $z$ in $G$, then the set of right cosets of $H$ in $G$ becomes a quandle by defining $$H x  \ast H y = H z ^{-1} x y^{-1} z y.$$ The quandle so obtained is denoted by $(G,H,z)$.

\item The preceding example can be generalized. Let $\{z_i~|~i \in I \}$ be elements of a group $G$, and $\{H_i~|~i \in I \}$ subgroups of $G$ such that $H_i \leq \C_{G}(z_i)$ for all $i \in I$. Then the disjoint union $Q=\sqcup_{i \in I}(G,H_i,z_i)$ becomes a quandle with
$$
H_i x  \ast H_j y= H_i z_i^{-1} x y^{-1} z_j y.
$$
\end{itemize}
\vspace*{.3mm}

In the reverse direction each quandle give rise to a group as follows.

\begin{definition}
The \textit{associated group} $\As(X)$ of a quandle $X$ is defined to be the group generated by the set $\{e_x~|~x \in X\}$ modulo the relations $e_{x \ast y} = e_y ^{-1} e_x e_y$ for all $x , y \in X$. \end{definition}
\vspace*{.6mm}

The presentation of the associated group of a quandle can be reduced as follows \cite[Theorem 5.1.7]{Winker}.

\begin{theorem}\label{presentation-of-associated-group}
If $Q$ is a quandle with a presentation $Q=  \langle X~~|~~R \rangle$, then its associated group has presentation $\As(Q) \cong  \langle e_x, ~~x \in X~~|~~\bar{R} \rangle$, where $\bar{R}$ consists of relations in $R$ with each expression $x \ast y$ replaced by $e_{y}^{-1} e_x e_y$.
\end{theorem}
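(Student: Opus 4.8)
The plan is to exhibit mutually inverse group homomorphisms between $\As(Q)$ and the group $G := \langle e_x,\ x\in X \mid \bar R\rangle$. Conceptually, the statement expresses the fact that $\As$ is left adjoint to $\Conj$ and hence preserves colimits, a quandle presentation being a coequalizer; but I would argue it directly. First I would record the base case: the associated group of the free quandle $FQ(X)$ on a set $X$ is the free group $F(X)$. This follows from uniqueness of left adjoints (the forgetful functor $\mathbf{Group}\to\mathbf{Set}$ factors through $\Conj$), or directly from the fact that $FQ(X)$ embeds into $\Conj(F(X))$ as the set of conjugates of the generators. Given a quandle word $w=w(x_1,\dots,x_k)$ in the elements of $X$, write $\bar w\in F(X)$ for the element obtained by replacing each subexpression $a\ast b$ by $e_b^{-1}e_a e_b$ and each $a\ast^{-1}b$ by $e_b e_a e_b^{-1}$; this $\bar w$ is exactly the image of $w$ under the natural map $FQ(X)\to\As(FQ(X))=F(X)$, and $w\mapsto\bar w$ is a well-defined map on $FQ(X)$.

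Next I would build $\phi\colon G\to\As(Q)$ by $e_x\mapsto e_{\overline x}$, where $\overline x$ denotes the image of $x$ in $Q$. To see $\phi$ is well defined, an easy induction on word length, using the defining relations $e_{q\ast q'}=e_{q'}^{-1}e_q e_{q'}$ of $\As(Q)$, shows that the image under $x\mapsto e_{\overline x}$ of a conjugation word $\bar w$ equals $e_{\overline w}\in\As(Q)$, where $\overline w\in Q$ is the evaluation of the quandle word $w$. Hence for a relation $(w_1=w_2)\in R$, $\phi$ sends the relator $\bar w_1\bar w_2^{-1}$ to $e_{\overline{w_1}}e_{\overline{w_2}}^{-1}$, which is trivial because $\overline{w_1}=\overline{w_2}$ in $Q$.

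In the other direction, each $q\in Q$ is the evaluation of some quandle word $w_q$ in $X$; I would set $\psi\colon\As(Q)\to G$ by $e_q\mapsto\bar w_q$. The main obstacle is showing $\psi$ is well defined, i.e.\ that whenever two quandle words $w,w'$ in $X$ have the same evaluation in $Q$, one has $\bar w=\bar w'$ in $G$; equivalently, that the congruence on $FQ(X)$ generated by $R$ is carried, via $w\mapsto\bar w$, into the normal closure in $F(X)$ of $\{\bar w_1\bar w_2^{-1}\mid(w_1=w_2)\in R\}$. I would handle this by describing the generated congruence explicitly: $w\sim w'$ iff $w'$ is obtained from $w$ by a finite chain of elementary moves, each either an application of a quandle axiom or the replacement of a subword matching one side of a relation in $R$ by the other side. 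One then checks that an axiom move leaves $\bar w$ unchanged (the quandle axioms become identities in $F(X)$), while a relation move multiplies $\bar w$ by a conjugate of some $\bar w_1\bar w_2^{-1}$, using that conjugation in $\Conj(F(X))$ is compatible with the ambient group multiplication. Granting this, $\psi$ respects the relations $e_{q\ast q'}=e_{q'}^{-1}e_q e_{q'}$ since one may take $w_{q\ast q'}=w_q\ast w_{q'}$, whence $\overline{w_{q\ast q'}}=\bar w_{q'}^{-1}\bar w_q\bar w_{q'}$.

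Finally I would check $\phi$ and $\psi$ are mutually inverse on generators: $\psi\phi(e_x)=\psi(e_{\overline x})=e_x$ (taking $w_{\overline x}=x$), and $\phi\psi(e_q)=\phi(\bar w_q)=e_{\overline{w_q}}=e_q$ by the induction recorded above. This yields the isomorphism $\As(Q)\cong G$. The only genuinely delicate point is the explicit description of the congruence on the free quandle and its compatibility with normal closures in the free group; everything else is bookkeeping.
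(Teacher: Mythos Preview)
The paper does not actually prove this theorem; it is stated with a citation to \cite[Theorem 5.1.7]{Winker} and used as a black box thereafter. So there is no ``paper's proof'' to compare against, and your proposal is a genuine proof where the paper offers none.

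Your argument is essentially correct. The construction of $\phi$ is unproblematic, and your identification of the delicate point---well-definedness of $\psi$---is accurate. One small imprecision: you claim that a relation move ``multiplies $\bar w$ by a conjugate of some $\bar w_1\bar w_2^{-1}$''. This is true when the replacement occurs in the leftmost (``base'') position of a left-associated word, but when the replacement happens inside the acting part (i.e.\ inside $v$ in $u\ast^e v$), the element $\bar w(\bar w')^{-1}$ is in general a product of \emph{two} elements of the normal closure, not a single conjugate. The inductive step still goes through, but the bookkeeping is slightly heavier than you indicate. A cleaner route that avoids the explicit congruence analysis altogether: obtain $\psi$ from the universal property of $\As$ by first producing a quandle homomorphism $Q\to\Conj(G)$. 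The composite $FQ(X)\to\Conj(F(X))\to\Conj(G)$ kills every relation in $R$ precisely because $\bar w_1=\bar w_2$ in $G$ by construction, so it descends to $Q$; the induced group map $\As(Q)\to G$ is your $\psi$. This packaging is exactly the ``$\As$ is left adjoint to $\Conj$, hence preserves coequalizers'' remark you made at the outset, and it sidesteps the elementary-move induction entirely.
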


It is well-known that the associated group of the link quandle $Q(L)$ of a link $L$ is the link group $\pi_1(\mathbb{S}^3 \setminus L)$, and the associated group of the free quandle on a set $X$ is the free group on $X$. For a given quandle $X$, there is a natural map
$$\eta: X \rightarrow \As(X)$$
defined as $\eta(x) = e_x$, which  is a quandle homomorphism considering the associated group $\As(X)$ as the conjugation quandle $\Conj\big(\As(X)\big)$. A quandle homomorphism $f: X \rightarrow Y$ induces a group homomorphism $f_{\sharp} : \Conj\big(\As(X)\big) \rightarrow \Conj\big(\As(Y)\big)$ defined by $f_{\sharp}(e_x) = e_{f(x)}.$ Moreover, there is a group homomorphism
$$\psi_X : \As(X) \rightarrow \Inn(X)$$
defined as $\psi_X(e_x) = S_x$ where $x \in X$, $e_x \in \As(X)$ and $S_x \in \Inn(X)$. It is easy to see that $\Ker(\psi_X)$ is contained in the center of the associated group $\As(X)$ giving rise to the central extension
$$ 1 \longrightarrow \Ker(\psi_X) \longrightarrow \As(X) \longrightarrow \Inn(X) \longrightarrow 1$$
 of groups. Notice that the homomorphism $\psi_X$ induces a right action of the associated group $\As(X)$ on the quandle $X$ defined as
$$x. e_y = x \ast y,$$
which we shall use later.
\vspace*{.6mm}

\begin{remark}\label{image-under-eta}
A trivial quandle homomorphism $f : X \rightarrow \{a\}$ induces a group homomorphism $f_{\sharp}: \As(X) \rightarrow \As(\{a\}) \cong \mathbb{Z}$, where $f_{\sharp}(e_x) = 1$ for all $x \in X$. Thus, under the natural map $\eta: X \rightarrow \As(X)$ none of the elements of $X$ map to the identity of the associated group $\As(X)$.
\end{remark}
\bigskip

\section{Link quandles of non-split links}\label{sec-non-split}
Residual finiteness of quandles was introduced and investigated in \cite{Bardakov-Singh-Singh}.
\vspace*{.3mm}

\begin{definition}
A quandle $Q$ is said to be {\it residually finite} if for $x,y \in X$ and $x \neq y$, there exist a finite quandle $F$ and a quandle homomorphism $\phi: X \rightarrow F$ such that $\phi(x) \neq \phi(y)$.
\end{definition}
\vspace*{.3mm}

Obviously, every finite quandle and every trivial quandle is residually finite. See \cite{Bardakov-Singh-Singh} for more examples.  A more general notion is that of a finitely separable subgroup of a group.

\begin{definition}
A subgroup $H$ of a group $G$ is said to be {\it finitely separable} if for any $g \in G \setminus H$, there exist a finite group $F$ and a group homomorphism $\phi: G \rightarrow F$ such that $\phi(g) \not\in \phi(H)$.
\end{definition}
\par

For example, if $G$ is a residually finite group and $H$ a finite subgroup of $G$, then $H$ is finitely separable in $G$. Recall that a connected $3$-manifold $M$ is said to be {\it irreducible} if every embedded $2$-sphere in $M$ bounds a 3-ball in $M$. The following result concerning  finitely separable subgroups of fundamental groups of irreducible 3-manifolds is due to Long and Niblo \cite{Long-Niblo}.
\vspace*{.3mm}

\begin{theorem}\label{Long-Niblo-thm}
Suppose that $M$ is an orientable, irreducible compact 3-manifold and $X$ an incompressible connected subsurface of a component of $\partial(M)$. If $p\in X$ is a base point, then $\pi_1(X,p)$ is a finitely separable subgroup of $\pi_1(M,p)$.
\end{theorem}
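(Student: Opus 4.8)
The plan is to recall the structure of the argument, which combines two ingredients. Write $S$ for the boundary component of $M$ containing $X$, so that $\pi_1(X,p) \le \pi_1(S,p) \le \pi_1(M,p)$. The first ingredient is that finitely generated surface groups and free groups are LERF (Scott, Hall), so that the finitely generated group $\pi_1(X,p)$ is separable in the surface group $\pi_1(S,p)$. The second is a covering-extension lemma for the incompressible boundary surface: every finite-index subgroup $N$ of $\pi_1(S,p)$ has the form $\Gamma \cap \pi_1(S,p)$ for some finite-index subgroup $\Gamma$ of $\pi_1(M,p)$ --- equivalently, every finite-sheeted cover of $S$ is induced by a finite-sheeted cover of $M$. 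Before using these I would dispose of the degenerate cases: if $X$ is a disk then $\pi_1(X,p)$ is trivial, and if $X$ is a $2$-sphere then irreducibility of $M$ makes $X$ bound a ball, so $\pi_1(X,p)$ is again trivial and there is nothing to prove. So assume $\chi(X) \le 0$, and recall also that $\pi_1(M,p)$ is residually finite by Hempel's theorem \cite{Hempel}.

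Granting these ingredients, the deduction is short. Let $g \in \pi_1(M,p) \setminus \pi_1(X,p)$; we must produce a finite-index subgroup of $\pi_1(M,p)$ containing $\pi_1(X,p)$ but not $g$. If $g \in \pi_1(S,p)$, then separability of $\pi_1(X,p)$ in $\pi_1(S,p)$ gives a finite-index $N \le \pi_1(S,p)$ with $\pi_1(X,p) \le N$ and $g \notin N$, and the covering-extension lemma gives a finite-index $\Gamma \le \pi_1(M,p)$ with $\Gamma \cap \pi_1(S,p) = N$; then $\pi_1(X,p) \le N \le \Gamma$, whereas $g \in \pi_1(S,p) \setminus N$ forces $g \notin \Gamma$. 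If $g \notin \pi_1(S,p)$, then it suffices to separate the full boundary surface subgroup $\pi_1(S,p)$ from $g$ in $\pi_1(M,p)$; this is the case $X = S$ of the statement, handled by the same covering-extension machinery together with residual finiteness of $\pi_1(M,p)$. In all cases $g$ is excluded from a finite-index subgroup containing $\pi_1(X,p)$, which is exactly finite separability.

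The main obstacle is the covering-extension lemma: it is the geometric heart of the theorem and genuinely requires all the hypotheses, since finite covers of incompressible surfaces do not in general extend across the ambient manifold. One natural route is to fix a hierarchy for $M$ adapted to $X$, that is, a finite sequence of cuts along incompressible surfaces reducing $M$ to a disjoint union of balls, and to build the desired finite cover of $M$ by induction up the hierarchy: one starts from the prescribed finite cover of $X$, extended over $S$, and at each stage extends the cover already built on a cut-open piece across the next gluing, arranging that the covers agree along the internal splitting surfaces. The delicate point is keeping these covers compatible along the splitting surfaces without being forced to pass to a smaller cover of $X$ that could re-admit $g$; this is where orientability and irreducibility of $M$, together with a separability (or malnormality) property of the splitting-surface subgroups, enter. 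I would expect the proof of this lemma to occupy the bulk of the work, the group-theoretic deductions above being comparatively routine.
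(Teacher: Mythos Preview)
The paper does not prove this theorem at all: it is quoted verbatim as a result of Long and Niblo \cite{Long-Niblo} and then used as a black box in the proof of Theorem~\ref{non-split-link-quandle-is-residually-finite}. There is therefore no ``paper's own proof'' against which to compare your proposal.

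Since you have in effect sketched a proof of the Long--Niblo theorem itself, a brief comment on that is in order. Your reduction via Scott's LERF theorem for surface groups is sound, but the ``covering-extension lemma'' you isolate --- that every finite-index subgroup of $\pi_1(S,p)$ arises as $\Gamma\cap\pi_1(S,p)$ for some finite-index $\Gamma\le\pi_1(M,p)$ --- is stronger than what Long and Niblo actually establish, and your proposed inductive construction up a hierarchy is both vague and not how they proceed. Their argument is shorter and more structural: one doubles $M$ along $X$ to obtain a closed (or boundary-reduced) irreducible $3$-manifold $N$ with $\pi_1(N)\cong\pi_1(M)\ast_{\pi_1(X)}\pi_1(M)$, notes that $\pi_1(N)$ is residually finite, and exploits the folding retraction $\pi_1(N)\to\pi_1(M)$ together with the normal form in the amalgamated product to separate any $g\notin\pi_1(X)$ from $\pi_1(X)$ by a finite quotient. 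The doubling trick replaces your hierarchy induction entirely and avoids the delicate compatibility issues you flag in your last paragraph.
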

\vspace*{.3mm}

We begin with the following result which will be used in the sequel.
\par

\begin{proposition}\label{res-finite-quandle}
Let $G$ be a group, $\{z_i~|~i \in I \}$ be a finite set of elements of $G$, and $\{H_i~|~i \in I \}$ subgroups of $G$ such that $H_i \le \C_G(z_i)$. If each $H_i$ is finitely separable in $G$, then the quandle $\sqcup_{i \in I}(G,H_i,z_i)$ is residually finite.
\end{proposition}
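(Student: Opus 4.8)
The plan is to separate two distinct elements of the quandle $Q = \sqcup_{i \in I}(G,H_i,z_i)$ by pushing them through a carefully chosen finite quotient of $G$ and building a finite quandle of the same "coset" type out of that quotient. Let $H_i x$ and $H_j y$ be two distinct elements of $Q$; either $i \neq j$ (they lie in different pieces) or $i = j$ and $Hx \neq Hy$, i.e. $x y^{-1} \notin H_i$. I would first reduce to producing a single finite quotient $\pi : G \to \bar G$ that simultaneously (a) distinguishes the two given cosets, and (b) "sees" the subgroups $H_i$ well enough to define a quandle structure on the relevant cosets of $\bar G$. For (a): when $i = j$, finite separability of $H_i$ gives a finite group $F$ and a homomorphism $\phi: G \to F$ with $\phi(xy^{-1}) \notin \phi(H_i)$; when $i \neq j$ there is nothing to arrange, since the two pieces of the target quandle will be disjoint by construction. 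Because $I$ is finite, one can intersect finitely many such kernels (together with the kernel of $\phi$) to obtain a single finite-index normal subgroup $N \trianglelefteq G$, and set $\bar G = G/N$, $\pi : G \to \bar G$ the quotient map.

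Next I would define the target finite quandle. For each $i \in I$, the image $\pi(H_i) =: \bar H_i$ is a subgroup of $\bar G$, and $\pi(z_i) =: \bar z_i$ satisfies $\bar H_i \le \C_{\bar G}(\bar z_i)$ since $H_i \le \C_G(z_i)$ and homomorphisms preserve commuting. Hence $\bar Q := \sqcup_{i \in I}(\bar G, \bar H_i, \bar z_i)$ is a well-defined quandle by the third bulleted construction in Section \ref{prelim}, and it is finite because $\bar G$ is. The map $Q \to \bar Q$ sending $H_i x \mapsto \bar H_i \pi(x)$ is well-defined (if $Hx = Hx'$ then $x x'^{-1} \in H_i$, so $\pi(x)\pi(x')^{-1} \in \bar H_i$) and is a quandle homomorphism: this is a direct check from the formula $H_i x \ast H_j y = H_i z_i^{-1} x y^{-1} z_j y$, which is manifestly compatible with applying $\pi$ entrywise.

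It remains to check that this homomorphism separates the two chosen elements. If $i \neq j$, then $\bar H_i \pi(x)$ and $\bar H_j \pi(y)$ lie in different components of $\bar Q$, hence are distinct. If $i = j$ and $xy^{-1} \notin H_i$, then by our choice of $N$ (whose image we arranged to lie inside $\ker$ of the separating map $\phi$, equivalently $N \le \phi^{-1}(1)$ and $\phi(xy^{-1}) \notin \phi(H_i)$), we get $\pi(x)\pi(y)^{-1} = \pi(xy^{-1}) \notin \bar H_i$, so $\bar H_i \pi(x) \neq \bar H_i \pi(y)$ in $(\bar G, \bar H_i, \bar z_i)$. Either way the images are distinct, and $\bar Q$ is finite, which is exactly residual finiteness of $Q$.

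The only genuinely delicate point — and hence the main obstacle — is bookkeeping the finitely many separability conditions into one normal subgroup of finite index while ensuring the resulting $\bar H_i$ are honest enough to support the quandle construction; this is where finiteness of $I$ is essential, and where one must be slightly careful that passing to a further finite quotient does not destroy the separation already achieved (it does not, since a finer quotient only refines the partition into cosets). Everything else is a routine verification that the coset-quandle formula commutes with group homomorphisms.
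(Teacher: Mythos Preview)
Your proof is correct and follows essentially the same route as the paper: for $i=j$ use separability of $H_i$ to pass to a finite quotient $\bar G$, form the finite coset quandle $\sqcup_i(\bar G,\bar H_i,\bar z_i)$, and check that the induced map is a separating quandle homomorphism. The paper differs only cosmetically: it handles the case $i\neq j$ with a two-element trivial quandle rather than via the coset construction, and it uses the single separating homomorphism $\phi$ directly without your intersection-of-kernels step---only one separability condition is ever needed for a fixed pair of elements, so the finiteness of $I$ enters solely to make the target quandle finite.
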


\begin{proof}
Let $H_k a \neq	  H_j b$ be two elements of $\sqcup_{i \in I}(G,H_i,z_i)$.
\vspace*{.3mm}

Case 1: $k \neq j$. Let $F=\{a',b' \}$ be a two element trivial quandle, and define $$\phi: \sqcup_{i \in I}(G,H_i,z_i) \rightarrow F$$ as
$$
\phi(H_i x)= \left\{
\begin{array}{ll}
a' &  \textrm{if}~ i=k,\\
b'~& \textrm{if}~ i\neq k.
\end{array} \right.
$$
Then $\phi$ is a quandle homomorphism with $\phi(H_k a)\neq \phi(H_j b)$.
\vspace*{.3mm}

Case 2: $k=j$. Since $H_k a \neq H_k b$, $a \neq hb$ for any $h \in H_k$. Further, since $H_k$ is finitely separable in $G$, there exists a finite group $F$ and a group homomorphism $\phi:G \rightarrow F$ such that $\phi(a) \neq \phi(h b)$ for each $h \in H_k$. Let $\overline{H_i}:= \phi(H_i)$  and $\bar{z_i}:=\phi(z_i)$ for each $i \in I$. Then  $\overline{H_i} \le  \C_F( \bar{z_i})$, and $\sqcup_{i \in I}(F, \overline{H_i},\bar{z_i})$ is a finite quandle. Further, the group homomorphism $\phi:G \rightarrow F$ induces a map $$\bar{\mathbb{\phi}}:\sqcup_{i \in I}(G,H_i,z_i) \rightarrow \sqcup_{i \in I}(F,\overline{H_i},\bar{z_i})$$ given by $$\bar{\mathbb{\phi}}(H_i x)=\overline{H_i} \phi(x),$$ which is a quandle homomorphism. Also, $\bar{\mathbb{\phi}}(H_k a) \neq \bar{\mathbb{\phi}}(H_k b)$, otherwise $\phi(a)=\phi(hb)$ for some $h\in H_k$, which is a contradiction.  Hence, $\sqcup_{i \in I}(G,H_i,z_i)$ is residually finite.
\end{proof}
\bigskip

To prove the residual finiteness of quandles of non-split links, we first recall the general construction of link quandles.  Let $L$ be an oriented link in $\mathbb{S}^3$ with components $K_1, K_2, \ldots, K_t$. Let $V(L)$ be a tubular neighborhood of $L$ and $C(L)=\overline{\mathbb{S}^{3}\setminus V(L)}$. Clearly, $V(L)$ is the disjoint union $V(K_1) \sqcup V(K_2) \sqcup \ldots \sqcup V(K_t)$, where $V(K_i)$ is a tubular neighborhood of $K_i$. Fix a base point $x_0$ in $C(L)$. Let $Q(L)$ be the set of homotopy classes of paths in $C(L)$ with initial point on the boundary of $\partial \big(V(L)\big)$ of $V(L)$ and end point at $x_0$. Then the binary operation
$$[a] \ast [b] :=[a b^{-1} m_{b(1)} b],$$
where $m_{b(1)}$ is a meridian at point $b(1)$, turns $Q(L)$ into a quandle, called the link quandle of $L$. For each $i $, define $Q(K_i)$ to be the set of homotopy classes of paths in $C(L)$ starting on the boundary $\partial V(K_i)$ of a tubular neighborhood $V(K_i)$ of $K_i$ and ending at $x_0$. Then each $Q(K_i)$ is a subquandle of $Q(L)$ with the above operation, and in fact $Q(L)=\sqcup_{i \in I} Q(K_i)$.
\par
Let $G(L)=\pi_1 \big(C(L),x_0 \big)$ be the link group of $L$. Then $G(L)$ acts on $Q(L)$ as
$$
[a]. [\alpha]=[a \alpha],
$$
where $[\alpha] \in G(L)$ and $[a] \in Q(L)$. One can easily check that the action keeps each $Q(K_i)$ invariant.
\vspace*{.3mm}

For each $i$, let $x_i \in \partial V(K_i)$ be a fixed base point, and $s_i$ a path from $x_i$ to $x_0$. Then each
$$\hat{s_i}: \pi_1(\partial V(K_i),x_i) \rightarrow G(L)$$
defined as  $\hat{s_i}([\alpha])=[s_i ^{-1} \alpha s_i]$
is a group homomorphism. If $H_i$ denote the image of $\hat{s_i}$, then we have the following result whose proof is analogous to the one worked out in \cite[Lemma 2]{Matveev} for knots.

\begin{lemma}\label{transitive-action}
The action of $G(L)$ on $Q(K_i)$ is transitive and stabilizer of $[s_i]$ is $H_i$.
\end{lemma}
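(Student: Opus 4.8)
The plan is to adapt the classical argument of \cite[Lemma 2]{Matveev} for knots; the only extra work is keeping track of which component of $\partial\big(V(L)\big)$ a given path meets. Write $T_i := \partial V(K_i)$, which is a torus, and recall that an element of $Q(K_i)$ is a homotopy class of a path in $C(L)$ from $T_i$ to $x_0$, a homotopy being allowed to slide the initial point freely along $T_i$ while keeping the terminal point fixed at $x_0$. For transitivity I would argue as follows: given $[a], [b] \in Q(K_i)$, the points $a(0)$ and $b(0)$ lie on the connected surface $T_i$, so one may choose a path $\gamma$ in $T_i$ from $a(0)$ to $b(0)$ and set $\alpha := a^{-1}\gamma b$, which is a loop at $x_0$, hence an element of $G(L)$. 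Then $[a].[\alpha] = [a a^{-1}\gamma b] = [\gamma b] = [b]$, the last equality being witnessed by the homotopy $t \mapsto \gamma|_{[t,1]}\cdot b$ that slides the initial point along $\gamma \subseteq T_i$. Thus the action on $Q(K_i)$ is transitive.

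For the stabilizer, one inclusion is immediate: if $[\alpha] = \hat{s_i}([\beta]) = [s_i^{-1}\beta s_i]$ for a loop $\beta$ in $T_i$ based at $x_i$, then $[s_i].[\alpha] = [s_i s_i^{-1}\beta s_i] = [\beta s_i] = [s_i]$, the last step again by sliding the initial point backwards along $\beta$ inside $T_i$; hence $H_i$ lies in the stabilizer of $[s_i]$. For the reverse inclusion, suppose $[\alpha]$ fixes $[s_i]$, i.e.\ $[s_i\alpha] = [s_i]$ in $Q(K_i)$, and choose a homotopy $H : [0,1]\times[0,1] \to C(L)$ realizing it, with $H(\cdot,1) \equiv x_0$, $H(s,0) \in T_i$ for all $s$, $H(0,\cdot) = s_i\alpha$, and $H(1,\cdot) = s_i$. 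Since $H(0,0) = H(1,0) = x_i$ is the common initial point of $s_i\alpha$ and $s_i$, the bottom edge $\tau(s) := H(s,0)$ is a loop in $T_i$ based at $x_i$, and traversing the boundary of the square exhibits the based loop $\tau\cdot s_i\cdot\alpha^{-1}\cdot s_i^{-1}$ at $x_i$ as null-homotopic in $C(L)$. Rearranging this relation gives $[s_i^{-1}\tau s_i] = [\alpha]$ in $G(L) = \pi_1(C(L),x_0)$, and since $[\tau] \in \pi_1(T_i,x_i)$ this says precisely $[\alpha] = \hat{s_i}([\tau]) \in H_i$. Therefore the stabilizer of $[s_i]$ equals $H_i$.

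The individual verifications are routine once the conventions are fixed, so I do not expect a serious obstacle here; the one point that must be handled carefully is the bookkeeping of boundary conditions on the homotopy $H$, since it is exactly the requirement that $H(\cdot,0)$ remain on $T_i$ with $x_0$ pinned that makes $\tau$ a genuine loop in $T_i$ and thereby places $[\alpha]$ in the image $H_i$ of $\hat{s_i}$. I would also note in passing that the action formula $[a].[\alpha] = [a\alpha]$ is well defined on homotopy classes, but this is immediate from functoriality of path concatenation.
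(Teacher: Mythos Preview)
Your argument is correct and is precisely the adaptation of \cite[Lemma 2]{Matveev} that the paper invokes; the paper itself gives no further details beyond citing Matveev, so your write-up is the expected proof. The only cosmetic point is that the boundary loop $\tau\cdot s_i\cdot\alpha^{-1}\cdot s_i^{-1}$ is based at $x_i$, so the rearrangement to $[s_i^{-1}\tau s_i]=[\alpha]$ implicitly uses the change-of-basepoint isomorphism along $s_i$; you might make that half-line explicit, but the content is all there.
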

\par

Let $m_i$ be the image of the meridian in $\pi_1(\partial V(K_i), x_i)$ under the map $\hat{s_i}$. Then $\sqcup_{i \in I}\big(G(L) ,H_i,m_i \big)$ becomes quandle under the operation defined as $$H_ig \ast H_j g'=H_i g g'^{-1} m_j g'.$$

\begin{theorem}\label{non-split-link-quandle-is-residually-finite}
The link quandle of a non-split link is residually finite.
\end{theorem}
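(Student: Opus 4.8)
The plan is to identify $Q(L)$ with the coset quandle $\sqcup_{i \in I}\big(G(L), H_i, m_i\big)$ and then apply Proposition \ref{res-finite-quandle}. By Lemma \ref{transitive-action}, for each component $K_i$ the group $G(L)$ acts transitively on the subquandle $Q(K_i)$ with the stabilizer of $[s_i]$ equal to $H_i$; hence $Q(K_i) \cong G(L)/H_i$ as $G(L)$-sets, and a direct computation shows that transporting the quandle operation $[a]\ast[b] = [a b^{-1} m_{b(1)} b]$ along this bijection gives exactly the operation $H_i g \ast H_j g' = H_i g g'^{-1} m_j g'$. Taking the disjoint union over $i$ yields a quandle isomorphism $Q(L) = \sqcup_{i \in I} Q(K_i) \cong \sqcup_{i\in I}\big(G(L), H_i, m_i\big)$. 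The index set $I$ is finite since $L$ has finitely many components. Moreover, $m_i$ is the $\hat{s_i}$-image of a meridian of $\partial V(K_i)$ and therefore lies in the image $H_i = \hat{s_i}\big(\pi_1(\partial V(K_i), x_i)\big)$ of the abelian group $\pi_1(\partial V(K_i), x_i) \cong \mathbb{Z}^2$; thus $H_i$ is abelian, and in particular $H_i \le \C_{G(L)}(m_i)$. So the only hypothesis of Proposition \ref{res-finite-quandle} that remains to be checked is finite separability of each $H_i$ in $G(L)$.

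To obtain finite separability I would appeal to Theorem \ref{Long-Niblo-thm} with $M = C(L)$ and the incompressible subsurface taken to be a whole boundary torus $\partial V(K_i)$. This requires two standard $3$-manifold facts about the exterior of a non-split link. First, $C(L)$ is a compact orientable irreducible $3$-manifold: orientability and compactness are clear, and irreducibility is the classical reformulation of non-splitness, since a link in $\mathbb{S}^3$ is split precisely when its exterior contains an essential $2$-sphere. Second, each boundary torus $\partial V(K_i)$ is incompressible in $C(L)$, unless $L$ is the trivial knot: a compressing disk, produced by the loop theorem, would after the usual cut-and-paste argument either separate $K_i$ from the other components by a $2$-sphere, contradicting non-splitness, or, when $L$ has a single component, force $K_i$ to be unknotted. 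The exceptional case $L=$ unknot causes no trouble, since then $H_1 = G(L)$ is vacuously finitely separable, so Proposition \ref{res-finite-quandle} still applies (indeed $Q(L)$ is then a one-element quandle).

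Granting these two facts, Theorem \ref{Long-Niblo-thm} shows that $\pi_1(\partial V(K_i), x_i)$ is finitely separable in $\pi_1(C(L), x_i)$. Since $\hat{s_i}$ is the composite of this inclusion with the change-of-basepoint isomorphism $\pi_1(C(L), x_i) \cong \pi_1(C(L), x_0) = G(L)$, and since finite separability of a subgroup is preserved under isomorphisms and under conjugation, it follows that each $H_i = \hat{s_i}\big(\pi_1(\partial V(K_i), x_i)\big)$ is finitely separable in $G(L)$. Now every hypothesis of Proposition \ref{res-finite-quandle} holds, and we conclude that $\sqcup_{i\in I}\big(G(L), H_i, m_i\big) \cong Q(L)$ is residually finite. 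I expect the main obstacle to be the second $3$-manifold fact, namely verifying incompressibility of the boundary tori and pinning down the dichotomy ``compressing disk $\Rightarrow$ split link or trivial knot''; this is exactly where non-splitness is used and where the trivial knot must be excised and treated by hand. The remaining ingredients --- the coset-quandle identification, the centralizer condition, finiteness of $I$, and the base-point bookkeeping for separability --- are routine.
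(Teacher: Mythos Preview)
Your proposal is correct and follows essentially the same route as the paper: identify $Q(L)$ with the coset quandle $\sqcup_{i\in I}\big(G(L),H_i,m_i\big)$ via Lemma~\ref{transitive-action}, invoke Theorem~\ref{Long-Niblo-thm} to obtain finite separability of each $H_i$, and conclude with Proposition~\ref{res-finite-quandle}. You actually supply more detail than the paper does---verifying irreducibility of $C(L)$, incompressibility of the boundary tori, the centralizer condition $H_i\le \C_{G(L)}(m_i)$, and the base-point bookkeeping---whereas the paper simply asserts that non-splitness plus Theorem~\ref{Long-Niblo-thm} yields the separability.
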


\begin{proof} First note that, for each $i$, the map $\big(G(L) ,H_i,m_i \big) \rightarrow Q(L_i)$ given by $$H_i g\mapsto [s_i g]$$ is bijective (by Lemma \ref{transitive-action}), and is also a quandle homomorphism. Since $Q(L)=\sqcup_{i \in I} Q(K_i)$, we obtain an isomorphism of quandles
$$\sqcup_{i \in I} \big(G(L),H_i,m_i \big) \rightarrow Q(L).$$ As $L$ is non-split, it follows from Theorem \ref{Long-Niblo-thm} that each $H_i$ is finitely separable in $G(L)$. Thus, by Proposition \ref{res-finite-quandle}, the link quandle $Q(L)$ is residually finite.
\end{proof}
\vspace*{.3mm}

The preceding theorem generalizes \cite[Theorem 6.8]{Bardakov-Singh-Singh} to links. It must be noted that the above arguments do not work for split links since their complements are reducible 3-manifolds. However, we give an algebraic proof for this case in the next section.
\bigskip

\section{Free products and quandles of split links}\label{sec-split-link}
We define the free product of quandles as follows. Let
$$
A = \langle X~|~R \rangle ~\textrm{and}~B = \langle Y~|~S \rangle
$$
be two quandles with non-intersecting sets of generators. Then the free product $A \star B$ is a quandle that is defined by the presentation
$$
A \star B = \langle X \sqcup Y~|~R \sqcup S\rangle.
$$
For example, if  $FQ_n$ is the free $n$-generated quandle, then
$$
FQ_n = \underbrace{T_1 \star T_1 \star \cdots \star T_1}_{n~\textrm{copies}},
$$
the free product of $n$ copies of trivial one element quandles. We refer the reader to the recent work \cite[Section 7]{Bardakov-Nasybullov} for more on free products of quandles. Free product of racks can be defined analogously.
\vspace*{.3mm}

\begin{lemma}\label{presentation-of-associated-group-of-free-product-of-quandles}
If $Q_1, Q_2$ are quandles, then $\As(Q_1 \star Q_2) \cong  \As(Q_1) \star \As(Q_2).$
\end{lemma}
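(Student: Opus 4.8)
The plan is to prove the isomorphism by directly comparing group presentations, using Theorem \ref{presentation-of-associated-group}. Fix presentations $Q_1 = \langle X~|~R\rangle$ and $Q_2 = \langle Y~|~S\rangle$ with $X \cap Y = \emptyset$, so that by definition $Q_1 \star Q_2 = \langle X \sqcup Y~|~R \sqcup S\rangle$. Applying Theorem \ref{presentation-of-associated-group} to this presentation gives
$$\As(Q_1 \star Q_2) \cong \langle e_z,~z \in X \sqcup Y~|~\overline{R \sqcup S}\rangle,$$
where $\overline{R \sqcup S}$ is obtained by replacing each subexpression $x \ast y$ occurring in a relation by $e_y^{-1} e_x e_y$.

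The key step is the observation that $\overline{R \sqcup S}$ decomposes as the disjoint union $\bar{R} \sqcup \bar{S}$ in which $\bar{R}$ involves only the generators $\{e_x~|~x \in X\}$ and $\bar{S}$ only $\{e_y~|~y \in Y\}$. This holds because, the two generating sets being disjoint and the free-product presentation imposing no cross relations, every defining relation of $Q_1$ is a quandle word in the elements of $X$ alone and every defining relation of $Q_2$ is a quandle word in $Y$ alone; hence the substitution $x\ast y \mapsto e_y^{-1}e_x e_y$ never mixes generators of the two factors. Applying Theorem \ref{presentation-of-associated-group} again, now to $Q_1$ and $Q_2$ separately, shows that $\As(Q_1) \cong \langle e_x,~x\in X~|~\bar R\rangle$ and $\As(Q_2) \cong \langle e_y,~y\in Y~|~\bar S\rangle$. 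Since the free product of two groups has as a presentation the disjoint union of the generators together with the disjoint union of the relators of the two factors, we conclude
$$\langle e_z,~z \in X \sqcup Y~|~\bar R \sqcup \bar S\rangle \cong \As(Q_1) \star \As(Q_2),$$
which gives the desired isomorphism.

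I expect no genuinely hard step here; the only point requiring care is the decomposition $\overline{R \sqcup S} = \bar R \sqcup \bar S$, which is exactly the remark above that a defining relator of one factor never involves generators of the other. I would also note that the same argument, or a routine induction, extends the statement to arbitrary free products $\star_{i \in I} Q_i$, the form needed later in Theorem \ref{arbitrary-free-product-of-residually-finite-quandles}. Alternatively, one can give a conceptual proof: the functor $\As$ is left adjoint to the conjugation functor $\Conj$ from groups to quandles, hence preserves coproducts, and the free product is the categorical coproduct in both the category of quandles and the category of groups; but the presentation argument above is self-contained given the results already recorded.
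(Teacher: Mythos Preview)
Your proof is correct and follows essentially the same approach as the paper: both apply Theorem \ref{presentation-of-associated-group} to the free-product presentation $\langle X \sqcup Y \mid R \sqcup S\rangle$, observe that the resulting group relations split as $\bar R \sqcup \bar S$, and recognize the outcome as the group-theoretic free product $\As(Q_1)\star\As(Q_2)$. Your write-up is slightly more explicit about why $\overline{R\sqcup S}=\bar R\sqcup\bar S$, and your closing remarks on arbitrary index sets and the left-adjoint viewpoint are correct but go beyond what the paper records.
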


\begin{proof}
If $Q_1$ and $Q_2$ have presentations $Q_1 = \langle X_1 ~~|~~R_1 \rangle$ and $Q_2 = \langle X_2 ~~|~~R_2 \rangle$, then $Q_1 \star Q_2 = \langle X_1 \sqcup X_2 ~~|~~R_1 \sqcup R_2 \rangle$. Now, by Theorem \ref{presentation-of-associated-group}
\begin{eqnarray*}
\As(Q_1 \star Q_2) & \cong & \langle e_x ~~(x \in  X_1 \sqcup X_2 )~~|~~\bar{R_1} \sqcup \bar{R_2} \rangle \\
 & \cong &  \langle e_x~~(x \in X_1) ~~|~~\bar{R_1} \rangle \star  \langle e_x ~~(x \in X_2) ~~|~~\bar{R_2} \rangle\\
 & \cong &  \As(Q_1) \star \As(Q_2).
\end{eqnarray*}
\end{proof}

The following result is well-known in combinatorial group theory, first proved by Gruenberg \cite[Theorem 4.1]{Gruenberg}. See also \cite{Baumslag-Tretkoff, Cohen}.
\vspace*{.3mm}

\begin{theorem}\label{free-product-of-residually-finite-groups}
A free product of residually finite groups is residually finite.
\end{theorem}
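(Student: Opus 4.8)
The plan is to reduce the assertion, in two steps, to the case of a free product of two \emph{finite} groups, and then to settle that case via virtual freeness. All the work will be concentrated in the finite case; the reductions are routine manipulations with the normal form.

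First I would perform the reductions. Let $G = *_{i \in I} G_i$ with each $G_i$ residually finite, and fix $1 \ne g \in G$. Only finitely many factors $G_{i_1}, \ldots, G_{i_n}$ occur in the reduced expression of $g$, so $g$ lies in $H = G_{i_1} * \cdots * G_{i_n}$, and the retraction $G \to H$ killing every other factor fixes $g$; hence it suffices to separate $g$ from $1$ in a finite quotient of $H$. Writing $H = G_{i_1} * (G_{i_2} * \cdots * G_{i_n})$ and inducting on the number of factors, we reduce to $G = A * B$ with $A$ and $B$ residually finite. Now write $g = c_1 c_2 \cdots c_k$ in reduced form, with consecutive syllables lying alternately in $A \setminus \{1\}$ and $B \setminus \{1\}$ (the case $k=1$ being immediate). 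Choose finite quotients $\varphi_A \colon A \to \bar{A}$ and $\varphi_B \colon B \to \bar{B}$ that are injective on the finite set of syllables occurring in $g$. The induced homomorphism $\varphi \colon A * B \to \bar{A} * \bar{B}$ sends $g$ to $\varphi(c_1)\varphi(c_2)\cdots\varphi(c_k)$, which is again reduced with no trivial syllable, hence nontrivial by the normal form theorem for free products. So everything comes down to showing that the free product $\bar{A} * \bar{B}$ of two \emph{finite} groups is residually finite.

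For the finite case, let $\Gamma = \bar{A} * \bar{B}$, a finitely generated group, act on its Bass--Serre tree: the action is faithful and each vertex stabilizer is a conjugate of $\bar{A}$ or of $\bar{B}$, hence finite, so $\Gamma$ is virtually free, i.e.\ it contains a free subgroup $L$ of finite index (for instance by the Kurosh subgroup theorem applied to a torsion-free subgroup of finite index). A finitely generated free group is residually finite, so $L$ is; and residual finiteness is inherited by a finite-index overgroup, since given $1 \ne h \in \Gamma$ one may choose a finite-index subgroup $L_0 \le L$ with $h \notin L_0$, replace $L_0$ by its normal core in $\Gamma$ (of finite index and still excluding $h$), and pass to that finite quotient. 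Thus $\Gamma$ is residually finite; composing $\varphi$ with a finite quotient of $\Gamma$ detecting $\varphi(g)$ shows $A * B$ is residually finite, and with the reductions above this proves the theorem.

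The genuinely nontrivial input is the last step: that a free product of two finite groups is residually finite, equivalently virtually free. I expect this to be the main obstacle, as it requires Bass--Serre theory (or the Kurosh subgroup theorem together with the existence of a torsion-free finite-index subgroup), or alternatively a direct Stallings-type construction of enough finite permutation quotients to detect a prescribed reduced word; once that is in hand, residual finiteness of free groups plus the elementary finite-index observation close the argument. This is essentially the classical route behind \cite{Gruenberg}.
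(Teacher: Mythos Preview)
The paper does not prove this theorem at all: it is stated as a well-known result of combinatorial group theory, attributed to Gruenberg \cite[Theorem~4.1]{Gruenberg} with further references to \cite{Baumslag-Tretkoff, Cohen}, and then used as a black box in the proof of Theorem~\ref{finite-free-product-of-residually-finite-quandles}. There is therefore nothing in the paper to compare your argument against.

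That said, your argument is correct. The reductions (arbitrary index set $\to$ finitely many factors via the retraction, finitely many $\to$ two via induction, two residually finite $\to$ two finite via finite quotients injective on the syllables of a fixed reduced word) are all valid, and the endpoint---that $\bar A * \bar B$ with $\bar A,\bar B$ finite is residually finite because it is virtually free---is standard: the kernel of the natural surjection $\bar A * \bar B \to \bar A \times \bar B$ has finite index and, by the Kurosh subgroup theorem, is free, after which residual finiteness of free groups together with the finite-index inheritance you sketch finishes the job. One small remark on the induction: when you write $H = G_{i_1} * (G_{i_2}*\cdots*G_{i_n})$ and appeal to the two-factor case, you are implicitly using that the second factor is already residually finite, so the induction hypothesis must be the full $(n-1)$-factor statement; you have set this up correctly, but it is worth making explicit. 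Gruenberg's original route goes through a general criterion on ``root properties'' rather than virtual freeness, so your approach, while equivalent in outcome, is organized differently and is arguably more transparent for this particular property.
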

\vspace*{.6mm}

We prove an analogue of the preceding theorem for quandles provided their associated groups are residually finite. Throughout, for ease of notation, for elements $x_0,x_1, x_2, \ldots, x_n$ of a quandle $X$, we write $x_0 \ast^{e_1} x_1 \ast^{e_2} x_2 \ast ^{e_3} \cdots \ast ^{e_n} x_n$ to denote the element $( \cdots(( x_0 \ast^{e_1} x_1) \ast^{e_2} x_2) \ast ^{e_3} \cdots )\ast ^{e_n} x_n$, where $e_i \in \{1,-1\}$. We note that every element of a quandle $X$ can be written in this form. Moreover, the expression $x_0 \ast^{e_1} x_1 \ast^{e_2} x_2 \ast ^{e_3} \cdots \ast ^{e_n} x_n$ is called a {\it reduced form} when $x_0 \neq x_1$ and if $x_i =x_{i+1}$, then $e_i = e_{i+1}$. Notice that the reduced form is not unique. For example, if $Q = \{ t \} \star R_3$ is the free product of one element trivial quandle and the dihedral quandle $R_3 = \{ a_0, a_1, a_2 \}$, then
$$
(t * a_1) * a_2 = (t * a_2) * (a_1 * a_2) =  (t * a_2) * a_0.
$$
\vspace*{.6mm}

\begin{theorem}\label{finite-free-product-of-residually-finite-quandles}
Let $Q_1,Q_2, \ldots,Q_n$ be residually finite quandles. If each associated group $\As(Q_i)$ is residually finite, then $Q_1 \star Q_2 \star \cdots \star Q_n$ is a residually finite quandle.
\end{theorem}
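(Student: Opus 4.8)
The plan is to realise $Q := Q_1 \star \cdots \star Q_n$ as a disjoint union of coset quandles over its associated group and then separate points exactly in the spirit of Proposition~\ref{res-finite-quandle}, the only new feature being that there may be infinitely many pieces. Put $G := \As(Q)$, which by Lemma~\ref{presentation-of-associated-group-of-free-product-of-quandles} equals $\As(Q_1) \star \cdots \star \As(Q_n)$ and is therefore residually finite by Theorem~\ref{free-product-of-residually-finite-groups}. Using the right action $x . e_y = x \ast y$ of $G$ on $Q$ and the identity $x_0 \ast^{e_1} x_1 \ast^{e_2} \cdots \ast^{e_k} x_k = x_0 . (e_{x_1}^{e_1} \cdots e_{x_k}^{e_k})$, every element of $Q$ lies in the $G$-orbit of some element of some $Q_i$. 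I would pick orbit representatives $q_\lambda$ lying in $\bigsqcup_i Q_i$, set $H_\lambda := \operatorname{Stab}_G(q_\lambda)$, note that $e_{q_\lambda} \in H_\lambda$ (idempotent law) and $H_\lambda \le \C_G(e_{q_\lambda})$ (since $e_{q_\lambda} = e_{q_\lambda . h} = h^{-1} e_{q_\lambda} h$ for $h \in H_\lambda$), and then check, exactly as for link quandles before Lemma~\ref{transitive-action}, that the orbit--stabiliser correspondence $q_\lambda . g \leftrightarrow H_\lambda g$ gives an isomorphism of quandles $Q \cong \bigsqcup_\lambda (G, H_\lambda, e_{q_\lambda})$.

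Given $u \neq v$ in $Q$: if they lie in different orbits, the map collapsing the orbit of $u$ to one point and the rest to the other is a homomorphism onto the two-element trivial quandle (orbits are invariant under $\ast$ and $\ast^{-1}$) separating them. If they lie in one orbit $\lambda_0$, write $u = q_{\lambda_0} . g_u$, $v = q_{\lambda_0} . g_v$, so $g_u g_v^{-1} \notin H_{\lambda_0}$; granting that $H_{\lambda_0}$ is finitely separable in $G$, choose a finite group $F$ and a homomorphism $\rho : G \to F$ with $\rho(g_u g_v^{-1}) \notin \rho(H_{\lambda_0})$. Since $F$ is finite only finitely many pairs $(\rho(H_\lambda), \rho(e_{q_\lambda}))$ occur, so assembling the corresponding coset quandles into the finite quandle $\bigsqcup (F, \rho(H_\lambda), \rho(e_{q_\lambda}))$ and applying $\rho$ coordinatewise yields a quandle homomorphism $Q \to \bigsqcup (F, \rho(H_\lambda), \rho(e_{q_\lambda}))$ under which $u$ and $v$ land in the same piece as the distinct cosets $\rho(H_{\lambda_0}) \rho(g_u)$ and $\rho(H_{\lambda_0}) \rho(g_v)$. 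Thus everything reduces to showing that each $H_\lambda$ is finitely separable in $G$.

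This last point, which is the heart of the matter, I would prove in three steps. First, if $q_\lambda \in Q_i$ and $g \in G$ fixes $q_\lambda$, then $g^{-1} e_{q_\lambda} g = e_{q_\lambda . g} = e_{q_\lambda}$, so $g \in \C_G(e_{q_\lambda})$; as $e_{q_\lambda} \in \As(Q_i) \setminus \{1\}$ (Remark~\ref{image-under-eta}) and $G$ is the free product of the $\As(Q_j)$, the classical fact that the centraliser of a nontrivial element of a free factor stays inside that factor forces $g \in \As(Q_i)$, whence $H_\lambda = \operatorname{Stab}_{\As(Q_i)}(q_\lambda)$. Second, this stabiliser is finitely separable in $\As(Q_i)$: if $q_\lambda . g \neq q_\lambda$ in $Q_i$, residual finiteness of $Q_i$ gives a finite quandle $F_i$ and a homomorphism $\psi : Q_i \to F_i$ with $\psi(q_\lambda . g) \neq \psi(q_\lambda)$, and composing the induced homomorphism $\As(Q_i) \to \As(F_i)$ with $\psi_{F_i} : \As(F_i) \to \Inn(F_i)$ gives a homomorphism $\theta : \As(Q_i) \to \Inn(F_i)$ to a \emph{finite} group satisfying $\psi(x . g') = \psi(x) . \theta(g')$, so $\theta(g)$ moves $\psi(q_\lambda)$ while $\theta(\operatorname{Stab}_{\As(Q_i)}(q_\lambda))$ fixes it. Third, I would invoke the general lemma that a subgroup $K$ of a retract $A$ of a residually finite group $G$ which is finitely separable in $A$ is finitely separable in $G$: given $g \in G \setminus K$ with retraction $r : G \to A$, if $r(g) \notin K$ pull back a separation in $A$ along $r$, and if $r(g) \in K$ (so $g \neq r(g)$) pick a finite quotient $\psi : G \to F$ with $\psi(g r(g)^{-1}) \neq 1$ and use $(\psi, \psi \circ r) : G \to F \times F$, which carries $K$ into the $\psi$-diagonal of $A$ but does not carry $g$ there. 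Since $\As(Q_i)$ is a retract of $G$ (kill the other free factors), combining the three steps shows $H_\lambda$ is finitely separable in $G$, finishing the proof. One could equally first treat $n = 2$ and induct, using Lemma~\ref{presentation-of-associated-group-of-free-product-of-quandles} and Theorem~\ref{free-product-of-residually-finite-groups} to keep the associated group residually finite at each stage; the argument above handles all $n$ uniformly.

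The step I expect to be most delicate is getting the coset-quandle description of $Q$ and the identification $\operatorname{Stab}_G(q) = \operatorname{Stab}_{\As(Q_i)}(q)$ precisely right, since it is exactly there that the free product structure of the associated group and the hypothesis that the $\As(Q_i)$ are residually finite both get used, and where one must deal with the possibility of infinitely many orbit pieces.
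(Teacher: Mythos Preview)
Your proof is correct and takes a genuinely different route from the paper's. The paper argues directly with reduced expressions: it reduces to $n=2$, distinguishes five cases according to whether $x,x'$ lie in $Q_1$, $Q_2$, or neither, and in the nontrivial cases pushes both elements through the natural map $\eta:Q\to\Conj(\As(Q))$, using normal forms in the free product $\As(Q_1)\star\As(Q_2)$ to see that $\eta(x)\neq\eta(x')$; residual finiteness of the $\As(Q_i)$ enters only to make $\Conj(\As(Q))$ residually finite. Your approach is structural: you invoke the general coset--quandle decomposition $Q\cong\bigsqcup_\lambda(G,H_\lambda,e_{q_\lambda})$ (a version of Joyce's representation theorem, valid for any quandle), reduce everything to finite separability of the stabilisers $H_\lambda$ in $G$, and prove that in three clean steps---$H_\lambda\le C_G(e_{q_\lambda})\le\As(Q_i)$ via Bass--Serre/normal-form facts about centralisers in free products, separability of $H_\lambda$ in $\As(Q_i)$ from residual finiteness of $Q_i$ through $\Inn(F_i)$, and a retract lemma to lift separability to $G$. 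The paper's argument is more elementary and self-contained, needing no auxiliary separability lemmas; yours is more conceptual, isolates exactly where each hypothesis is used, handles all $n$ at once, and in effect shows that Proposition~\ref{res-finite-quandle} already contains the whole theorem once one knows the stabilisers are separable. Your handling of the possibly infinite index set (collapsing pieces with the same image data $(\rho(H_\lambda),\rho(e_{q_\lambda}))$) is also correct, since the quandle operation on the disjoint union depends only on that data.
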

\begin{proof}
It is enough to consider the case $n=2$. Set $Q= Q_1 \star Q_2$. Let $x$ and $x'$ be two distinct elements of $Q$, where
\begin{align*}
x &=a_0 \ast ^{e_1}a_1  \ast ^{e_2} a_2 \ast ^{e_3}\ldots \ast ^{e_n} a_n\textrm{,}\\
x'&=b_0 \ast ^{e_1'}b_1 \ast ^{e_2'} b_2  \ast ^{e_3'}\ldots \ast ^{e_m'} b_m
\end{align*}
are their reduced expressions, and $a_i, b_j $ lie in $Q_1 \sqcup Q_2$. 
\vspace*{.6mm}

Case 1: $x, x' \in Q_1$ or  $x, x' \in Q_2$. Suppose that  $x, x' \in Q_1$. Since $Q_1$ is a residually finite quandle, there exist a finite quandle $F$ and a quandle homomorphism $\phi: Q_1 \rightarrow F$ such that $\phi(x)\neq \phi(x')$. Define a map $\tilde{\phi}: Q \rightarrow {F}$ by setting
$$
\tilde{\phi}(q)= \left\{
\begin{array}{ll}
\phi(q) &  \textrm{if}~q  \in Q_1,\\
a~& \textrm{if}~ q \in Q_2, ~~\textrm{where}~~a~~\textrm{is some fixed element of}~~F.
\end{array} \right.
$$
Since $\tilde{\phi}$ preserve all the relations in $Q$, it extends to a quandle homomorphism with $\tilde{\phi}(x) \neq \tilde{\phi}(x')$ in $F$.
\vspace*{.6mm}

Case 2: $ x \in Q_1$ and $x' \in Q_2$. Consider a map $\phi : Q \rightarrow FQ(X)$, where $X= \{ a, b \}$ and $FQ(X)$ is the free quandle on $X$, defined as
$$
\phi(q)= \left\{
\begin{array}{ll}
a~&  \textrm{if}~q  \in Q_1,\\
b~& \textrm{if}~ q \in Q_2.
\end{array} \right.
$$
Since $\phi$ preserve all the relations in $Q$, it extends to a quandle homomorphism with $\phi(x) \neq \phi(x')$ in $FQ(X)$.
\vspace*{.6mm}

Case 3: $ x \in Q \setminus (Q_1 \sqcup Q_2)$ and $x ' \in Q_1$. We can assume that either $a_0 \in Q_1$, $a_1 \in Q_2$ and $a_2 \ldots , a_n \in Q_1 \sqcup Q_2$ or $a_0 \in Q_2$, $a_1 \in Q_1$ and $a_2,\ldots , a_n \in Q_1 \sqcup Q_2$ i.e.,

$$
x= \left\{
\begin{array}{ll}
q_1 \ast^{e_1} q_2  \ast^{e_2} a_2 \ast ^{e_3} \ldots \ast^{e_n} a_n & ~~\textrm{where}~~q_1 \in Q_1,~~q_2 \in Q_2,\\
\textrm{or} &\\
q_2 \ast ^{e_1} q_1 \ast ^{e_2} a_2 \ast ^{e_3} \ldots \ast^{e_n} a_n & ~~\textrm{where}~~q_1 \in Q_1,~~q_2 \in Q_2.
\end{array} \right.
$$

It follows from Lemma \ref{presentation-of-associated-group-of-free-product-of-quandles}, Theorem \ref{free-product-of-residually-finite-groups} and \cite[Proposition 4.1]{Bardakov-Singh-Singh} that $\Conj\big(\As(Q)\big)$ is a residually finite quandle. Let
$$
\eta:Q  \rightarrow \Conj\big(\As(Q)\big)
$$
be the natural quandle homomorphism (see the discussion below Theorem \ref{presentation-of-associated-group}). Then, we have
$$
\eta(x_0 \ast ^{e_1}x_1  \ast ^{e_2} x_2 \ast ^{e_3}\ldots \ast ^{e_n} x_n )= (e_{x_1} ^{e_1}  e_{x_2} ^{e_2} \ldots  e_{x_n} ^{e_n})^{-1} e_{x_0} (e_{x_1} ^{e_1}  e_{x_2} ^{e_2} \ldots  e_{x_n} ^{e_n}).
$$
We claim that $\eta(x) \neq \eta(x')$. 
\vspace*{.6mm}

Subcase 3.1: If $x =  q_1 \ast^{e_1} q_2  \ast^{e_2} a_2 \ast ^{e_3} \ldots \ast^{e_n} a_n, ~~\textrm{where}~~q_1 \in Q_1,~~q_2\in Q_2,~~ \textrm{and}~~a_2, \ldots, a_{n}   \in Q_1 \sqcup Q_2$, then
\begin{align*}
\eta(x) &= (e_{q_2} ^{e_1}  e_{a_2} ^{e_2} \ldots  e_{a_n} ^{e_n})^{-1} e_{q_1} (e_{q_2} ^{e_1}  e_{a_2} ^{e_2} \ldots  e_{a_n} ^{e_n})\\
&= e_{a_n} ^{-e_n} \ldots e_{a_2} ^{-e_2} e_{q_2} ^{-e_1} e_{q_1} e_{q_2} ^{e_1}  e_{a_2} ^{e_2} \ldots  e_{a_n} ^{e_n}.
\end{align*}
Suppose that $\eta(x)=\eta(x')$. Then by the Remark \ref{image-under-eta} and the fact that elements of $\As(Q_1)$ have no relations with elements of $\As(Q_2)$ in the group $\As(Q)$, it follows that either $e_{q_2} ^{e_1}  e_{a_2} ^{e_2} \ldots  e_{a_n} ^{e_n}= 1$ in $\As(Q)$ or $e_{q_2} ^{e_1}  e_{a_2} ^{e_2} \ldots  e_{a_n} ^{e_n}= e_{q_{i_1}}^{\epsilon _1} e_{q_{i_2}}^{\epsilon _2} \ldots e_{q_{i_k}}^{\epsilon _k} $, where $q_{i_1}, q_{i_2} , \ldots, q_{i_k} $ belongs to $Q_1$ and $\epsilon_j = \pm 1$ for $1 \leq j \leq k$. Since $\As(Q)$ has a right action on the quandle $Q$, this implies that in either situation $q_1.(e_{q_2} ^{e_1}  e_{a_2} ^{e_2} \ldots  e_{a_n} ^{e_n})$ belongs to $Q_1$. Thus, $x=q_1 \ast ^{e_1} q_2 \ast ^{e_2} a_2 \ast ^{e_3}\ldots \ast ^{e_n} a_n  \in Q_1$, which is a contradiction. Hence we must have $\eta(x) \neq \eta(x')$.
\vspace*{.6mm}

Subcase 3.2: If $x = q_2 \ast ^{e_1} q_1 \ast ^{e_2} a_2 \ast ^{e_3} \ldots \ast^{e_n} a_n,  ~~\textrm{where}~~q_1 \in Q_1,~~q_2 \in Q_2~~ \textrm{and}~~a_2, \ldots , a_{n} \in Q_1 \sqcup Q_2,$ then
\begin{align*}
\eta(x) &= (e_{q_1} ^{e_1}  e_{a_2} ^{e_2} \ldots  e_{a_n} ^{e_n})^{-1} e_{q_2} (e_{q_1} ^{e_1}  e_{a_2} ^{e_2} \ldots  e_{a_n} ^{e_n})\\
&= e_{a_n} ^{-e_n} \ldots e_{a_2} ^{-e_2} e_{q_1} ^{-e_1} e_{q_2} e_{q_1} ^{e_1}  e_{a_2} ^{e_2} \ldots  e_{a_n} ^{e_n}.
\end{align*}
Clearly $\eta(x) \neq \eta(x')$ since they belong to different conjugacy classes in $\As(Q)$.
\vspace*{.6mm}

Case 4: $ x \in Q \setminus (Q_1 \sqcup Q_2)$ and $x ' \in Q_2$. This is similar to Case 3.
\vspace*{.6mm}

Case 5: $ x, x' \in Q \setminus (Q_1 \sqcup Q_2).$ This case can be reduced to one of the Cases (1--4) by repeated use of the second quandle axiom. More precisely, we can replace the element $x$ by $y$ and $x'$ by $y'$, where
\begin{align*}
y&= a_0 \ast ^{e _1} a_1 \ast ^{e_2} \cdots \ast^{e_n} a_n \ast^{-e_m'} b_m \ast^{-e' _{m-1}} b_{m-1} \ast^{-e' _{m-2}} \cdots \ast^{-e' _{1}} b_1,\\
y'&= b_0.
\end{align*}
Since finite quandles, free quandles \cite[Theorem 5.3]{Bardakov-Singh-Singh} and $\Conj\big(\As(Q)\big)$ are residually finite, we conclude that $Q= Q_1 \star Q_2$ is a residually finite quandle.
\end{proof}
%%%%%%%%%%%%%%%%%%%%%%%%%%%%%%%%%%%%%%%%%%%%%%%%%%%%%%%%%%%%%%%%%%%%%
%%%%%%%%%%%%%%%%%%%%%%%%%%%%%%%%%%%%%%%%%%%%%%%%%%%%%%%%%%%%%%%%%%%%%
%%%%%%%%%%%%%%%%%%%%%%%%%%%%%%%%%%%%%%%%%%%%%%%%%%%%%%%%%%%%%%%%%%%%%
\vspace*{.6mm}

The preceding result can be extended to arbitrary family of quandles.

\begin{theorem}\label{arbitrary-free-product-of-residually-finite-quandles}
Let $\{Q_i\}_{i \in I}$ be a family of residually finite quandles. If each $\As(Q_i)$ is a residually finite group, then the free product $\star_{i \in I} Q_i$ is a residually finite quandle.
\end{theorem}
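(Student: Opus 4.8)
The plan is to reduce the statement to the finitely generated case already established in Theorem \ref{finite-free-product-of-residually-finite-quandles}, using the basic observation that any single element of a free product of quandles is a word involving only finitely many of the factors. Fix presentations $Q_i = \langle X_i \mid R_i\rangle$, so that $\star_{i\in I} Q_i = \langle \bigsqcup_{i\in I} X_i \mid \bigsqcup_{i\in I} R_i\rangle$, and let $x \neq x'$ be two elements of this free product. Writing each of $x$ and $x'$ in a reduced form $a_0 \ast^{e_1} a_1 \ast^{e_2} \cdots \ast^{e_n} a_n$ as in the preceding section, only finitely many generators occur, so there is a finite \emph{nonempty} subset $J \subseteq I$ such that both $x$ and $x'$ lie in the subquandle of $\star_{i\in I}Q_i$ generated by $\bigsqcup_{j\in J} X_j$.

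Next I would produce, for this finite $J$, a quandle retraction
$$
\rho : \star_{i\in I} Q_i \longrightarrow \star_{j\in J} Q_j
$$
by declaring $\rho$ to be the identity on each generator lying in some $X_j$ with $j \in J$, and to send every generator lying in some $X_i$ with $i \notin J$ to one fixed element $a$ of $\star_{j\in J}Q_j$ (for instance $a \in Q_{j_0}$ for a chosen $j_0 \in J$; here nonemptiness of $J$ is used). This prescription respects every defining relation: a relation from $R_j$ with $j\in J$ is preserved because $\rho$ is the identity on $X_j$, while a relation from $R_i$ with $i\notin J$ is carried to an instance of $a \ast a = a$, which holds by the first quandle axiom. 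Hence $\rho$ is a well-defined quandle homomorphism. Since $\rho$ composed with the natural inclusion $\iota : \star_{j\in J}Q_j \hookrightarrow \star_{i\in I}Q_i$ is the identity, $\iota$ is injective; and because $x$ and $x'$ lie in its image, say $x = \iota(\tilde x)$ and $x' = \iota(\tilde x')$, we get $\tilde x \neq \tilde x'$ in $\star_{j\in J}Q_j$.

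Finally, $\{Q_j\}_{j\in J}$ is a finite family of residually finite quandles each of whose associated groups $\As(Q_j)$ is residually finite, so Theorem \ref{finite-free-product-of-residually-finite-quandles} supplies a finite quandle $F$ and a quandle homomorphism $\psi : \star_{j\in J}Q_j \to F$ with $\psi(\tilde x) \neq \psi(\tilde x')$. Then $\psi \circ \rho : \star_{i\in I}Q_i \to F$ is a quandle homomorphism separating $x$ and $x'$, which proves that $\star_{i\in I}Q_i$ is residually finite. The only step requiring genuine care is the construction and well-definedness of the retraction $\rho$ — in particular checking that collapsing all unused factors to a single point is consistent with every quandle relation — after which the result is a purely formal consequence of the finite case; this is the point I expect to be the main (albeit mild) obstacle.
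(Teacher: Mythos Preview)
Your proof is correct and follows essentially the same route as the paper's: pick a finite subset $J$ of indices covering the letters appearing in $x$ and $x'$, collapse all remaining factors to a single fixed element to obtain a homomorphism onto $\star_{j\in J}Q_j$, and then invoke Theorem~\ref{finite-free-product-of-residually-finite-quandles}. Your argument is in fact slightly more careful than the paper's, since you explicitly use the retraction identity $\rho\circ\iota=\mathrm{id}$ to justify $\tilde x\neq\tilde x'$, whereas the paper simply asserts $\phi(x)\neq\phi(x')$.
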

\begin{proof}
Let $Q= \star_{i \in I} Q_i$ be the free product of residually finite quandles $Q_i$. Let $x, x' \in Q$ be two distinct elements such that
\begin{align*}
x &=a_0 \ast ^{e_1}a_1  \ast ^{e_2} a_2 \ast ^{e_3}\cdots \ast ^{e_n} a_n\textrm{,}\\
x'&=b_0 \ast ^{e_1'}b_1 \ast ^{e_2'} b_2  \ast ^{e_3'}\cdots \ast ^{e_m'} b_m.
\end{align*}
Consider the set $S = \{ a_i, b_j ~~|~~ 1 \leq i \leq n, ~~ 1 \leq j \leq m \}$. Then $S$ is a finite set contained in $ Q_{i_1} \sqcup Q_{i_2} \sqcup \cdots \sqcup Q_{i_k}$ for some $i_1, i_2, \ldots, i_k \in I$. Define a map $$\phi: Q \rightarrow Q_{i_1} \star Q_{i_2} \star \cdots \star Q_{i_k}$$ by setting

$$
\phi(q)= \left\{
\begin{array}{ll}
q &  \textrm{if}~~q  \in  Q_{i_1} \sqcup Q_{i_2} \sqcup \cdots \sqcup Q_{i_k},\\
a~& \textrm{if}~ q \in \sqcup_{i \in I} Q_i \setminus (Q_{i_1} \sqcup Q_{i_2} \sqcup \cdots \sqcup Q_{i_k}),
\end{array} \right.
$$
where $a$ is some fixed element in $\sqcup_{i \in I} Q_i \setminus ( Q_{i_1} \sqcup Q_{i_2} \sqcup \cdots \sqcup Q_{i_k})$. Since $\phi$ preserves all the relations in $Q$, it extends to a quandle homomorphism with $\phi(x) \neq \phi(x')$. Hence by Theorem \ref{finite-free-product-of-residually-finite-quandles}, $Q$ is a residually finite quandle.
\end{proof}
\vspace*{.6mm}
%%%%%%%%%%%%%%%%%%%%%%%%%%%%%%%%%%%%%%%%%%%%%%%%%%%%%%%%%%%%%%%%%%%%%
%%%%%%%%%%%%%%%%%%%%%%%%%%%%%%%%%%%%%%%%%%%%%%%%%%%%%%%%%%%%%%%%%%%%%
%%%%%%%%%%%%%%%%%%%%%%%%%%%%%%%%%%%%%%%%%%%%%%%%%%%%%%%%%%%%%%%%%%%%%
Now we present the main result of this section.

\begin{theorem}\label{split-link-res-finite}
The link quandle of any link is residually finite.
\end{theorem}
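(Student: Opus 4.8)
The plan is to split into two cases according to whether $L$ is split, handle the non-split case by citing Theorem \ref{non-split-link-quandle-is-residually-finite}, and reduce the split case to it via a free-product decomposition together with Theorem \ref{arbitrary-free-product-of-residually-finite-quandles}.

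If $L$ is non-split there is nothing to do, as this is exactly Theorem \ref{non-split-link-quandle-is-residually-finite}. So suppose $L$ is split and write $L = L_1 \sqcup L_2 \sqcup \cdots \sqcup L_r$ as the disjoint union of its non-split components, i.e.\ each $L_j$ is non-split and the $L_j$ lie in pairwise disjoint balls of $\mathbb{S}^3$. The crucial step is the identification
$$Q(L) \cong Q(L_1) \star Q(L_2) \star \cdots \star Q(L_r)$$
as quandles. To establish this I would take a diagram $D$ of $L$ which is a disjoint union of diagrams $D_j$ of the $L_j$, and use the Wirtinger-type presentation of the link quandle with one generator per arc and one relation per crossing: since $D = D_1 \sqcup \cdots \sqcup D_r$, the set of generators and the set of relations split as disjoint unions indexed by $j$, and the relations arising from $D_j$ involve only the generators arising from $D_j$. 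By the definition of the free product of quandles from Section \ref{sec-split-link}, this presentation is precisely a presentation of $Q(L_1) \star \cdots \star Q(L_r)$, and matching generators gives the desired isomorphism. (Alternatively one can compare with the van Kampen decomposition $G(L) \cong G(L_1) \star \cdots \star G(L_r)$ of the link group and transfer the statement through Lemma \ref{presentation-of-associated-group-of-free-product-of-quandles} and the natural maps $\eta$.)

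It then remains to verify the hypotheses of Theorem \ref{arbitrary-free-product-of-residually-finite-quandles} for the family $\{Q(L_j)\}_{j=1}^r$. Each $Q(L_j)$ is residually finite because $L_j$ is non-split, by Theorem \ref{non-split-link-quandle-is-residually-finite}. Moreover the associated group of $Q(L_j)$ is the link group $\As(Q(L_j)) \cong \pi_1(\mathbb{S}^3 \setminus L_j)$, a finitely generated $3$-manifold group, hence residually finite by Hempel's theorem, a consequence of geometrization \cite{Hempel, Perelman1, Perelman2, Perelman3}. Thus Theorem \ref{arbitrary-free-product-of-residually-finite-quandles} applies to $Q(L) \cong \star_j Q(L_j)$, proving the claim.

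The main obstacle I anticipate is the decomposition $Q(L) \cong \star_j Q(L_j)$: it is geometrically transparent, but a rigorous argument needs either careful bookkeeping of the quandle presentation under a split diagram or a clean comparison with the group-theoretic free-product splitting of $G(L)$; once this is in place, the rest is a direct appeal to Theorem \ref{non-split-link-quandle-is-residually-finite}, Theorem \ref{arbitrary-free-product-of-residually-finite-quandles}, and residual finiteness of link groups.
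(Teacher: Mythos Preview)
Your proposal is correct and follows essentially the same route as the paper: decompose $Q(L)$ as a free product of the quandles of the non-split pieces, invoke Theorem \ref{non-split-link-quandle-is-residually-finite} for each factor, use residual finiteness of link groups for the associated-group hypothesis, and then apply the free-product theorem. The only cosmetic differences are that the paper states the free-product decomposition without the Wirtinger justification you supply, and it cites the finite version (Theorem \ref{finite-free-product-of-residually-finite-quandles}) rather than Theorem \ref{arbitrary-free-product-of-residually-finite-quandles}, which is immaterial since $r$ is finite.
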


\begin{proof}
The link quandle of a split link is a free product of link quandles of its non-split components. By Theorem \ref{non-split-link-quandle-is-residually-finite}, non-split link quandles are residually finite. Now using the fact that all link groups are residually finite, the result now follows from Theorem \ref{finite-free-product-of-residually-finite-quandles}.
\end{proof}

Recall that a quandle $X$ is called Hopfian if every surjective quandle endomorphism of $X$ is injective. We conclude with the following result which is a consequence of  the preceding theorem and \cite[Theorem 5.7, Theorem 5.11, Theorem 4.3]{Bardakov-Singh-Singh}.
\vspace*{.3mm}

\begin{corollary}
The link quandle of a link is Hopfian, has solvable word problem, and has residually finite inner automorphism group.
\end{corollary}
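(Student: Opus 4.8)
The plan is to derive all three assertions formally from Theorem~\ref{split-link-res-finite} together with the general results on residually finite quandles established in \cite{Bardakov-Singh-Singh}. The only extra input needed is that the link quandle $Q(L)$ is finitely presented: from a diagram of $L$ with $n$ arcs one reads off the Wirtinger-type presentation $Q(L) = \langle x_1, \ldots, x_n \mid r_1, \ldots, r_n \rangle$, with one generator per arc and one relation $c = a \ast b$ per crossing; in particular $Q(L)$ is finitely generated, and by Theorem~\ref{split-link-res-finite} it is residually finite.

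For the Hopfian property I would invoke \cite[Theorem~5.7]{Bardakov-Singh-Singh}, which we may read as the quandle analogue of Mal'cev's theorem, asserting that a finitely generated residually finite quandle is Hopfian; applied to $Q(L)$ it gives that every surjective endomorphism of $Q(L)$ is injective.

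For solvability of the word problem I would use \cite[Theorem~5.11]{Bardakov-Singh-Singh}: a finitely presented residually finite quandle has solvable word problem. Concretely, given two words in $x_1, \ldots, x_n$, one runs in parallel the enumeration of consequences of $r_1, \ldots, r_n$ and the enumeration of quandle homomorphisms of $Q(L)$ onto finite quandles; residual finiteness guarantees that one of the two processes terminates, deciding whether the two words represent the same element. For residual finiteness of the inner automorphism group I would appeal to \cite[Theorem~4.3]{Bardakov-Singh-Singh}, which deduces residual finiteness of $\Inn(X)$ from that of the quandle $X$ — a separating homomorphism $X \to F$ onto a finite quandle being used to detect, inside the finite group $\Inn(F)$, a given nontrivial inner automorphism at a suitable point of $X$. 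Taking $X = Q(L)$ then completes the proof.

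The argument is essentially a bookkeeping exercise, the substance having been placed in Theorem~\ref{split-link-res-finite}; the one point deserving care is the third item, since residual finiteness is not inherited by arbitrary quotient groups, and $\Inn(X) \cong \As(X)/\Ker(\psi_X)$ is such a quotient — so the implication ``$X$ residually finite $\Rightarrow \Inn(X)$ residually finite'' genuinely requires the argument of \cite[Theorem~4.3]{Bardakov-Singh-Singh} rather than merely the residual finiteness of $\As\big(Q(L)\big) = G(L)$.
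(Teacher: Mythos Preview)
Your proposal is correct and follows the same approach as the paper: the paper's proof is a one-sentence citation of Theorem~\ref{split-link-res-finite} together with \cite[Theorem~5.7, Theorem~5.11, Theorem~4.3]{Bardakov-Singh-Singh}, and you invoke exactly the same three results for exactly the same three conclusions. Your additional remarks---that $Q(L)$ is finitely presented via the Wirtinger-type presentation, and that the $\Inn$ statement genuinely requires \cite[Theorem~4.3]{Bardakov-Singh-Singh} rather than merely residual finiteness of $G(L)$---are helpful elaborations the paper leaves implicit.
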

\vspace*{.3mm}

%%%%%%%%%%%%%%%%%%%%%%%%%%%%%%%%%%%%%%%%%%%%%%%%%%%%%%%%%%%%%%%%%%%%%%%%%%%%%%%%%%%%%%%%%%%%%%%%%%%
%%%%%%%%%%%%%%%%%%%%%%%%%%%%%%%%%%%%%%%%%%%%%%%%%%%%%%%%%%%%%%%%%%%%%%%%%%%%%%%%%%%%%%%%%%%%%%%%%%%
%%%%%%%%%%%%%%%%%%%%%%%%%%%%%%%%%%%%%%%%%%%%%%%%%%%%%%%%%%%%%%%%%%%%%%%%%%%%%%%%%%%%%%%%%%%%%%%%%%%

\section{Concluding remarks}\label{concluding-remarks}
In this section, we first give an alternate proof of residual finiteness of quandles of split links whose each component is a prime knot.  We note the following result due to Ryder \cite[Corollary 3.6]{Ryder}.
\vspace*{.6mm}

\begin{theorem}\label{prime-quandles-injective-in-associated-group}
The fundamental quandle of a knot in $\mathbb{S}^3$ embeds into its associated group if and only if the knot is prime.
\end{theorem}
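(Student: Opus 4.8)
The plan is to reduce the statement to a known fact about centralisers in knot groups. Fix a knot $K \subset \mathbb{S}^3$ and write $G = \As(Q(K)) = \pi_1(\mathbb{S}^3 \setminus K)$, let $m \in G$ be a meridian and $H \le G$ the associated peripheral subgroup; note $H$ is abelian and contains $m$, so $H \le \C_G(m)$. By Lemma \ref{transitive-action}, $G$ acts transitively on $Q(K)$ with $\mathrm{Stab}([s]) = H$, giving a quandle isomorphism $Q(K) \cong (G,H,m)$ under which $[s] \mapsto H$. The natural map $\eta : Q(K) \to \Conj(G)$ is equivariant for the $G$-action on $Q(K)$ and the conjugation action of $G$ on itself, and $\eta([s]) = m$, so $\eta(Hg) = g^{-1}mg$; moreover, since $\As$ is left adjoint to $\Conj$, every quandle homomorphism out of $Q(K)$ into a conjugation quandle factors through $\eta$, so $Q(K)$ embeds into its associated group if and only if $\eta$ is injective. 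Now $\eta(Hg_1) = \eta(Hg_2)$ iff $g_1 g_2^{-1} \in \C_G(m)$, hence $\eta$ is injective iff $\C_G(m) = H$. Thus the theorem becomes: the centraliser of a meridian in a knot group equals the peripheral subgroup if and only if the knot is prime.

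For the direction "nontrivial connected sum $\Rightarrow$ no embedding" I would argue directly. If $K = K_1 \# K_2$ with both $K_i$ nontrivial, van Kampen's theorem gives $G \cong G_1 \ast_{\langle m \rangle} G_2$, the amalgamated product of the two knot groups over their common meridian, and the peripheral subgroup is $H = \langle m, \ell_1 \ell_2 \rangle$, where $\ell_i$ is the longitude of $K_i$. Each $\ell_i$ commutes with $m$ (both lie in the abelian peripheral subgroup of $G_i$), while $\ell_i \notin \langle m \rangle$: a longitude is nontrivial and lies in the commutator subgroup, whereas $m$ generates the abelianisation $G_i^{\mathrm{ab}} \cong \mathbb{Z}$. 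A normal form count in the amalgamated product then shows $\ell_1 \notin H$, since every element of $H$ has the form $m^a(\ell_1\ell_2)^b$, which for $b \neq 0$ is reduced of syllable length $2|b|$, whereas $\ell_1$ has syllable length one, and $b = 0$ would force $\ell_1 \in \langle m\rangle$. Hence $\ell_1 \in \C_G(m) \setminus H$; equivalently, $H$ and $H\ell_1$ are distinct elements of $Q(K)$ with $\eta(H) = m = \eta(H\ell_1)$, so $Q(K)$ does not embed into $G$.

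The hard part is the converse: $\C_G(m) = H$ when $K$ is prime. This is exactly Simon's theorem on centralisers of peripheral elements in knot groups, which I would simply cite. For a self-contained proof one must invoke genuine $3$-manifold topology: decompose the exterior $M = \mathbb{S}^3 \setminus \mathrm{int}\,N(K)$ along its JSJ tori. The boundary torus $\partial M$ lies in a single piece $M_0$, which for a prime knot is either hyperbolic or Seifert fibered (torus knots, cable spaces) --- primeness is precisely what rules out an essential annulus exhibiting $K$ as a connected sum. If $M_0$ is hyperbolic, $m$ is parabolic and its centraliser in $\pi_1(M_0)$ is the maximal parabolic (cusp) subgroup $\cong \mathbb{Z}^2 = H$; if $M_0$ is Seifert fibered, $m$ is not a power of the regular fibre and a direct computation in the Seifert group gives $\C(m) = H$; and since $m$ is not conjugate into any JSJ torus, a Bass--Serre argument along the decomposition graph propagates $\C_{\pi_1(M_0)}(m) = H$ to $\C_G(m) = H$. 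The main obstacle is this last step, which rests on the torus decomposition and geometrisation (or Simon's older hierarchy argument), in contrast with the soft algebra of the other direction. One should also keep in mind the convention that the unknot is counted as prime here, consistent with the fact that its quandle is the one-element quandle, which embeds into $\mathbb{Z} = \As(Q(\mathrm{unknot}))$.
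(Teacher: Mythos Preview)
The paper gives no proof of this theorem; it simply records it as a result of Ryder \cite[Corollary~3.6]{Ryder}. Your proposal, by contrast, supplies an actual argument, and it is essentially Ryder's: first reduce injectivity of $\eta : Q(K) \to \Conj(G)$ to the group-theoretic condition $\C_G(m) = H$, then treat the two directions separately. Your reduction is correct, and the adjunction remark---that any quandle map into a conjugation quandle factors through $\eta$, so ``embeds into its associated group'' is equivalent to ``$\eta$ is injective''---is a clean justification of a point often left tacit.

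Your treatment of the composite case via the amalgamated decomposition $G \cong G_1 \ast_{\langle m \rangle} G_2$ and the normal-form count showing $\ell_1 \in \C_G(m) \setminus H$ is correct and is exactly the mechanism Ryder uses. For the prime direction you invoke Simon's theorem on centralisers of peripheral elements and sketch a JSJ/geometrisation justification; Ryder likewise imports this step as a black box from $3$-manifold topology (she cites Tsau, whose argument in turn rests on the annulus theorem). So what you have written is not merely compatible with the paper's citation---it is a faithful outline of the cited proof, with the one honest admission that the prime direction ultimately rests on nontrivial $3$-manifold input rather than pure algebra.
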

\vspace*{.6mm}

It is interesting to know which other quandles embeds into their associated groups. In this direction, we refer to a recent result of Bardakov and Nasybullov \cite[Lemma 7.1]{Bardakov-Nasybullov}.

\begin{lemma}\label{embedding-of-quandles-into-associated-group}
Let $Q, P$  be quandles. If the natural maps $Q \rightarrow \As(Q), P \rightarrow \As(P)$ are injective, then the natural map $Q \star P \rightarrow \As(Q) \star \As(P)$ is injective.
\end{lemma}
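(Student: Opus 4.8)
The plan is to realise $Q$ and $P$ as subquandles of $\Conj\big(\As(Q)\big)$ and $\Conj\big(\As(P)\big)$ via the natural maps — which is legitimate precisely because these maps are injective — and to set $\Gamma:=\As(Q)\star\As(P)$, identified with $\As(Q\star P)$ by Lemma \ref{presentation-of-associated-group-of-free-product-of-quandles}, so that the natural map in question becomes $\eta=\eta_{Q\star P}\colon Q\star P\to\Conj(\Gamma)$, $q\mapsto e_q$. As in the proof of Theorem \ref{finite-free-product-of-residually-finite-quandles}, every $w\in Q\star P$ has an expression $w=q_0\ast^{e_1}q_1\ast^{e_2}\cdots\ast^{e_n}q_n$ with $q_i\in Q\sqcup P$, and then, using the right action of $\Gamma$ on $Q\star P$, one has $w=q_0\cdot g_w$ with $g_w=e_{q_1}^{e_1}e_{q_2}^{e_2}\cdots e_{q_n}^{e_n}\in\Gamma$; since $\eta$ is $\Gamma$-equivariant, $\eta(w)=g_w^{-1}\,e_{q_0}\,g_w$. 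We must show that $\eta(w)=\eta(w')$ forces $w=w'$.

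First I would pin down the leading letters. By Remark \ref{image-under-eta}, $e_{q_0}\neq 1$ in $\As(Q)$ (resp. $\As(P)$), hence $e_{q_0}\neq1$ in $\Gamma$, and likewise $e_{q_0'}\neq1$; so $\eta(w)$ is a $\Gamma$-conjugate of a nontrivial element of the free factor containing $q_0$, and similarly for $\eta(w')$. In a free product a nontrivial element of one factor is never conjugate to an element of the other factor, so $\eta(w)=\eta(w')$ forces $q_0$ and $q_0'$ to lie in the same factor; by symmetry assume both lie in $Q$ (the other case uses $\As(P)$ and $\eta_P$ in place of $\As(Q)$ and $\eta_Q$). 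From $g_w^{-1}e_{q_0}g_w=g_{w'}^{-1}e_{q_0'}g_{w'}$ we obtain $u^{-1}e_{q_0}u=e_{q_0'}$ with $u:=g_wg_{w'}^{-1}\in\Gamma$. Here $e_{q_0}$ is a nontrivial element of the free factor $\As(Q)$ that is carried into $\As(Q)$ by conjugation by $u$; a direct application of the normal form theorem for free products (equivalently, $\C_\Gamma(e_{q_0})\subseteq\As(Q)$ together with the conjugacy theorem) then shows $u\in\As(Q)$.

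Finally, $u\in\As(Q)$ acts on the subquandle $Q$, and this action is the restriction of the $\As(Q\star P)$-action on $Q\star P$ (immediate from the definitions, both being $x\cdot e_y=x\ast y$ on generators), so $q_0\cdot u\in Q$ and the relation $u^{-1}e_{q_0}u=e_{q_0'}$ reads $\eta_Q(q_0\cdot u)=\eta_Q(q_0')$ in $\Conj\big(\As(Q)\big)$. Since $\eta_Q$ is injective by hypothesis, $q_0\cdot u=q_0'$ in $Q$, hence in $Q\star P$, and therefore
$$w'=q_0'\cdot g_{w'}=(q_0\cdot u)\cdot g_{w'}=q_0\cdot(u\,g_{w'})=q_0\cdot\big(g_wg_{w'}^{-1}g_{w'}\big)=q_0\cdot g_w=w .$$
The main obstacle — and the only place anything beyond formal manipulation is needed — is the step producing $u\in\As(Q)$: it rests on the conjugacy/centralizer behaviour of free factors inside a free product of groups. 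This is classical, so the real content of the argument is the equivariant bookkeeping with the $\As(Q\star P)$-action and the repeated use of the injectivity of $\eta_Q$ and $\eta_P$; once these are set up correctly the conclusion is essentially automatic.
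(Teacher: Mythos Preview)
The paper does not give its own proof of this lemma; it is quoted without argument from \cite[Lemma 7.1]{Bardakov-Nasybullov}. Your proposal therefore cannot be compared to an in-paper proof, but it is a correct self-contained argument. The only substantive external input is the free-product fact that if $a\in A\setminus\{1\}$ and $u^{-1}au\in A$ in $A\star B$ then $u\in A$; this is indeed a direct consequence of the normal form (or, as you note, of the conjugacy theorem together with $\C_{A\star B}(a)\subseteq A$), and the remaining steps---the $\Gamma$-equivariance of $\eta$, the identity $\eta_Q(q_0\cdot u)=u^{-1}e_{q_0}u$ in $\As(Q)$, and the final chain of equalities---are exactly as you say. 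One small point you use implicitly and might make explicit: the $\As(Q)$-action on $Q$ agrees, via the canonical map $Q\to Q\star P$, with the restricted $\As(Q\star P)$-action, so that the equality $q_0\cdot u=q_0'$ established in $Q$ transports to $Q\star P$.
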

\vspace*{.6mm}

As a consequence of the above results, we have

\begin{theorem}
If $L$ is a link consisting of untangled components each of which is a prime knot, then $Q(L)$ is a residually finite quandle.
\end{theorem}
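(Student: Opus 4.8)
The plan is to exhibit $Q(L)$ as a subquandle of $\Conj\big(\As(Q(L))\big)$ and then to show that this ambient quandle is residually finite; residual finiteness of $Q(L)$ follows at once, since any quandle homomorphism to a finite quandle separating the images of two distinct points restricts (along an embedding) to one separating the points themselves.

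First I would use that the components $K_1,\dots,K_t$ of $L$ are untangled to write $Q(L)=Q(K_1)\star Q(K_2)\star\cdots\star Q(K_t)$, a free product of knot quandles. Since each $K_i$ is a prime knot, Theorem \ref{prime-quandles-injective-in-associated-group} gives that the natural map $Q(K_i)\to\As\big(Q(K_i)\big)$ is injective. I would then iterate Lemma \ref{embedding-of-quandles-into-associated-group}: it handles a free product of two factors, and for the inductive step one regards $Q(K_1)\star\cdots\star Q(K_{t-1})$ as a single quandle whose natural map into its associated group is injective — this is the inductive hypothesis, once the associated group is identified with $\As(Q(K_1))\star\cdots\star\As(Q(K_{t-1}))$ via Lemma \ref{presentation-of-associated-group-of-free-product-of-quandles} — and then applies the two-factor case to this quandle together with $Q(K_t)$. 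The upshot is that the natural map
$$
\eta\colon Q(L)\longrightarrow \As\big(Q(L)\big)\;\cong\;\As\big(Q(K_1)\big)\star\cdots\star\As\big(Q(K_t)\big)
$$
is injective, where the displayed isomorphism is again Lemma \ref{presentation-of-associated-group-of-free-product-of-quandles}.

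It remains to check that $\Conj\big(\As(Q(L))\big)$ is residually finite. Each factor $\As\big(Q(K_i)\big)$ is the knot group $\pi_1(\mathbb{S}^3\setminus K_i)$, which is residually finite, so Theorem \ref{free-product-of-residually-finite-groups} shows that $\As(Q(L))$ is residually finite, and hence $\Conj\big(\As(Q(L))\big)$ is a residually finite quandle by \cite[Proposition 4.1]{Bardakov-Singh-Singh}. Composing a separating homomorphism with the embedding $\eta$ then separates any two distinct points of $Q(L)$ by a homomorphism to a finite quandle, so $Q(L)$ is residually finite.

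I expect the only point needing care to be the iteration of Lemma \ref{embedding-of-quandles-into-associated-group} across $t$ factors: one must confirm that the canonical isomorphisms of Lemma \ref{presentation-of-associated-group-of-free-product-of-quandles} intertwine the natural maps $\eta$ of the various iterated free products, so that the successive embeddings compose to the single embedding $\eta$ above. Everything else — residual finiteness of knot groups, passage to $\Conj$, and the pullback of separating quotients along an embedding — is immediate from the cited results.
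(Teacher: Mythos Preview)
Your proof is correct and follows exactly the same route as the paper's: decompose $Q(L)$ as a free product of the prime-knot quandles, use Ryder's theorem together with the Bardakov--Nasybullov embedding lemma to embed $Q(L)$ into $\As(Q(L))$, and conclude via residual finiteness of knot groups and Gruenberg's theorem. Your version simply makes explicit the iteration of Lemma~\ref{embedding-of-quandles-into-associated-group} over $t$ factors and the passage through $\Conj$ (via \cite[Proposition~4.1]{Bardakov-Singh-Singh}), both of which the paper leaves implicit in its one-line appeal to the three cited results.
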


\begin{proof}
Observe that the link quandle $Q(L)$ of the link $L$ is a free product of knot quandles of its constituent prime knots. Further, recall that the associated group of a knot is the knot group, which is residually finite. The result now follows from Theorem \ref{prime-quandles-injective-in-associated-group}, Lemma \ref{embedding-of-quandles-into-associated-group} and Theorem \ref{free-product-of-residually-finite-groups}.
\end{proof}
\par
Finally we discuss residual finiteness of the associated groups of quandles. The following results are well-known in combinatorial group theory.
\vspace*{.6mm}

\begin{theorem}\label{finitely-presented-quotient}
If $G$ is a finitely generated group with infinitely generated center $\Z(G)$, then the quotient $G/\Z(G)$ is not finitely presented.
\end{theorem}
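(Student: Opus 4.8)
The plan is to exploit the central extension
\[
1 \longrightarrow \Z(G) \longrightarrow G \longrightarrow G/\Z(G) \longrightarrow 1
\]
together with the standard principle of combinatorial group theory that, in any short exact sequence $1 \to N \to G \to Q \to 1$ with $G$ finitely generated and $Q$ finitely presented, the kernel $N$ is finitely generated \emph{as a normal subgroup} of $G$, i.e.\ $N$ is the normal closure in $G$ of a finite subset. Arguing by contradiction, I would assume that $Q := G/\Z(G)$ is finitely presented and deduce from this that $\Z(G)$ is finitely generated, contrary to hypothesis.

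First I would fix a finite generating set $x_1, \dots, x_k$ of $G$, let $F$ be the free group on corresponding symbols, and consider the surjections $F \to G \to Q$ with kernels $S \subseteq R$, so that $\Z(G) \cong R/S$. Since $Q$ is assumed finitely presented, the theorem of B.~H.~Neumann that a finitely presented group admits a finite presentation on \emph{any} prescribed finite generating set applies to the generating set $\{\,\bar{x}_1, \dots, \bar{x}_k\,\}$ of $Q$ consisting of the images of the $x_i$; this produces finitely many words $w_1, \dots, w_m$ in the $x_i$ such that $R$ is the normal closure of $\{w_1, \dots, w_m\}$ in $F$. Pushing forward along $F \to G$, the image of $R$ is precisely $\Z(G)$, and it is therefore the normal closure in $G$ of the images $\rho_1, \dots, \rho_m$ of $w_1, \dots, w_m$.

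The decisive step is now to invoke centrality: since each $\rho_j$ lies in $\Z(G)$, conjugation by elements of $G$ fixes every $\rho_j$, so the normal closure of $\{\rho_1, \dots, \rho_m\}$ in $G$ coincides with the ordinary subgroup $\langle \rho_1, \dots, \rho_m\rangle$ they generate. Hence $\Z(G) = \langle \rho_1, \dots, \rho_m\rangle$ is finitely generated, which is the desired contradiction, and so $G/\Z(G)$ cannot be finitely presented.

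I expect the only genuinely delicate point to be the reduction to a presentation on the prescribed generating set — Neumann's lemma, equivalently the fact that any two finite presentations of a group are related by finitely many Tietze transformations; once this is granted the remainder is purely formal, the conceptual heart being the elementary but crucial observation that for a \emph{central} kernel the notions ``finitely generated as a normal subgroup'' and ``finitely generated as a subgroup'' coincide.
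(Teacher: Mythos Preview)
Your argument is correct and is the standard proof of this classical fact. Note, however, that the paper does not actually supply a proof of this theorem: it is stated there as a well-known result from combinatorial group theory and is then invoked (together with Theorem~\ref{virtually-residually-finite-groups}) in the proof of Proposition~\ref{ass-group-finite}. So there is no proof in the paper to compare your approach against; what you have written is precisely the kind of justification one would give if asked to fill in the omitted argument.
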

\vspace*{.6mm}

\begin{theorem}\label{virtually-residually-finite-groups}
Let $G$ be a group. If $N$ is a normal subgroup of finite index in $G$ and is residually finite group, then $G$ is residually finite group.
\end{theorem}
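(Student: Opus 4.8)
The plan is to verify the definition of residual finiteness for $G$ directly: given an arbitrary non-identity element $g \in G$, we must exhibit a finite group together with a homomorphism from $G$ to it that does not send $g$ to the identity. I would split into two cases according to whether or not $g$ lies in $N$.

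If $g \notin N$, then since $N$ is normal of finite index in $G$, the canonical projection $G \to G/N$ is a homomorphism onto the finite group $G/N$, and $gN \neq N$ shows that $g$ survives; this case uses nothing beyond the hypotheses on $N$. The substantive case is $g \in N$ with $g \neq 1$. Here residual finiteness of $N$ provides a finite group $F$ and a homomorphism $\varphi \colon N \to F$ with $\varphi(g) \neq 1$; set $M = \ker \varphi$, so that $M \trianglelefteq N$, $[N:M] \leq |F| < \infty$, and $g \notin M$. The one genuine obstacle is that $M$ need not be normal in $G$, so $\varphi$ does not obviously induce a finite quotient of $G$. To get around this I would replace $M$ by its normal core $K = \bigcap_{x \in G} x M x^{-1}$ in $G$. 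Since $[G:M] = [G:N]\,[N:M] < \infty$, the standard fact that a finite-index subgroup contains a finite-index normal subgroup of the ambient group — seen concretely through the permutation action of $G$ on the finite coset set $G/M$, whose kernel is precisely $K$ — shows that $K \trianglelefteq G$ with $[G:K] < \infty$. Because $K \subseteq M$ and $g \notin M$, we have $g \notin K$, so the projection $G \to G/K$ is a homomorphism onto a finite group in which $g$ has non-trivial image.

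Since in each case $g$ is detected by a finite quotient of $G$, it follows that $G$ is residually finite. The only step requiring any care is the passage from $M$ to its normal core $K$; apart from that the argument is routine. In fact the second case could be shortened further by using the equivalent formulation of residual finiteness in terms of finite-index (not necessarily normal) subgroups, since then $M$ itself already separates $g$ from the identity; I prefer the version above only because it produces an explicit finite quotient, matching the definition used throughout the paper.
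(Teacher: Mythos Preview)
Your proof is correct. Note, however, that the paper does not actually prove this theorem: it is stated as a well-known fact from combinatorial group theory (along with Theorem~\ref{finitely-presented-quotient}) and used without proof in the argument for Proposition~\ref{ass-group-finite}. There is therefore no paper proof to compare against; what you have written is the standard argument for this classical result, and your handling of the one nontrivial point---passing from the finite-index subgroup $M \leq N$ to its normal core in $G$---is exactly what is needed.
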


\begin{proposition}\label{ass-group-finite}
If $X$ is a finite quandle, then its associated group $\As(X)$ is a residually finite group.
\end{proposition}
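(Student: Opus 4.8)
The plan is to realize $\As(X)$ as a finite extension of a finitely generated abelian group, and then to invoke Theorem \ref{virtually-residually-finite-groups}. First I would observe that since $X$ is finite, the inner automorphism group $\Inn(X)$ is a subgroup of the symmetric group on the finite set $X$, hence finite. Recalling from the discussion following Theorem \ref{presentation-of-associated-group} the central extension
$$1 \longrightarrow \Ker(\psi_X) \longrightarrow \As(X) \longrightarrow \Inn(X) \longrightarrow 1,$$
in which $\Ker(\psi_X)$ is contained in the center $\Z\big(\As(X)\big)$, the finiteness of $\Inn(X)$ gives that $\Ker(\psi_X)$, and hence the possibly larger subgroup $\Z\big(\As(X)\big)$, has finite index in $\As(X)$.

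The key step is to show that $\Z\big(\As(X)\big)$ is finitely generated. Here I would use that $\As(X)$ is finitely generated, being generated by the finite set $\{e_x \mid x \in X\}$, and that the quotient $\As(X)/\Z\big(\As(X)\big)$ is itself a quotient of the finite group $\As(X)/\Ker(\psi_X) \cong \Inn(X)$, hence finite and in particular finitely presented. If $\Z\big(\As(X)\big)$ were infinitely generated, Theorem \ref{finitely-presented-quotient} would force $\As(X)/\Z\big(\As(X)\big)$ to be not finitely presented, a contradiction. Thus $\Z\big(\As(X)\big)$ is a finitely generated abelian group, and such groups are residually finite, being finite direct sums of infinite and finite cyclic groups.

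To finish, I would take $N = \Z\big(\As(X)\big)$: it is a normal subgroup of finite index in $\As(X)$ and is residually finite, so Theorem \ref{virtually-residually-finite-groups} yields that $\As(X)$ is residually finite.

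I expect the only genuinely delicate point to be the passage from finiteness of $\Inn(X)$ to finite generation of $\Z\big(\As(X)\big)$, which is exactly where Theorem \ref{finitely-presented-quotient} is needed; the finite-index and residual-finiteness bookkeeping at the end is routine. As an alternative one could bypass Theorem \ref{finitely-presented-quotient} by applying the Reidemeister--Schreier fact that a finite-index subgroup of a finitely generated group is finitely generated directly to $\Ker(\psi_X)$, but routing the argument through the stated results keeps it self-contained.
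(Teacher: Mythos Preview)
Your proposal is correct and follows essentially the same argument as the paper: both use finiteness of $\Inn(X)$ to conclude that $\Z\big(\As(X)\big)$ has finite index, invoke Theorem \ref{finitely-presented-quotient} to get that the center is finitely generated (hence residually finite), and finish with Theorem \ref{virtually-residually-finite-groups}. Your remark about the Reidemeister--Schreier alternative is a valid shortcut not mentioned in the paper.
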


\begin{proof}
Consider the natural group homomorphism  $\psi_{X}: \As(X) \rightarrow \Inn(X)$. Since $X$ is a finite quandle, the inner automorphism group $\Inn(X)$ of $X$ is finite, and hence  $\As(X)/ \Ker(\psi_{X})$ is finite. Moreover, $\Ker(\psi_{X})$ is contained in the center $\Z\big(\As(X)\big)$ of $\As(X)$, and hence $\As(X)/ \Z\big(\As(X)\big)$ is finite. By Theorem \ref{finitely-presented-quotient}, $\Z\big(\As(X)\big)$ is a finitely generated abelian group, and hence residually finite. The result now follows from Theorem \ref{virtually-residually-finite-groups}.
\end{proof}
\vspace*{.6mm}
\par
Since associated groups of finite quandles, free quandles and link quandles are residually finite, the following seems to be the case in general.
\vspace*{.6mm}

\begin{conjecture}\label{residually-finite-associated-group}
The associated group of a finitely presented residually finite quandle is a residually finite group.
\end{conjecture}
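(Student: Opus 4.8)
The plan is to build on the central extension
$$1 \longrightarrow \Ker(\psi_Q) \longrightarrow \As(Q) \longrightarrow \Inn(Q) \longrightarrow 1$$
recorded in Section \ref{prelim}, in the spirit of the proof of Proposition \ref{ass-group-finite}. Since $Q$ is finitely presented, Theorem \ref{presentation-of-associated-group} gives a finite presentation of $\As(Q)$; in particular $\As(Q)$ is finitely generated, and we fix such a presentation. The first step is to observe that $\Inn(Q)$ is residually finite whenever $Q$ is: given $1 \neq g \in \Inn(Q)$, pick $z \in Q$ with $z \cdot g \neq z$, use residual finiteness of $Q$ to obtain a finite quandle $F$ and a homomorphism $\phi \colon Q \to F$ with $\phi(z \cdot g) \neq \phi(z)$, and note that $S_x \mapsto S_{\phi(x)}$ defines a homomorphism from $\Inn(Q)$ onto the finite group $\Inn\big(\phi(Q)\big)$ which is nontrivial on $g$; it is well defined because any product of the $S_x^{\pm 1}$ acting trivially on $Q$ acts trivially on the subquandle $\phi(Q)$. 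Composing $\psi_Q$ with such a finite quotient then separates every element of $\As(Q) \setminus \Ker(\psi_Q)$ from the identity in a finite quotient of $\As(Q)$.

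It remains to separate the nontrivial elements of the central subgroup $\Ker(\psi_Q)$ from the identity in finite quotients of $\As(Q)$. Following the template of Proposition \ref{ass-group-finite}, one would first try to prove that $\Ker(\psi_Q)$ — equivalently $\Z\big(\As(Q)\big)$, since $\Ker(\psi_Q)$ lies in the centre — is finitely generated; by Theorem \ref{finitely-presented-quotient} this amounts to showing that the quotient $\As(Q)/\Z\big(\As(Q)\big)$, which is a central quotient of $\Inn(Q)$, is finitely presented. Granting this, $\Ker(\psi_Q)$ is a finitely generated abelian group, hence residually finite, and one is reduced to proving that the displayed central extension is residually finite knowing that both its kernel and its quotient are.

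This last reduction is where I expect the real difficulty to lie, and it is the reason the statement is posed only as a conjecture. In Proposition \ref{ass-group-finite} the subgroup $\Ker(\psi_X)$ has finite index, so Theorem \ref{virtually-residually-finite-groups} applies directly; here the centre has infinite index in general, and a central extension of a residually finite group by a finitely generated residually finite group need not be residually finite — the classical example being Deligne's central extension of $\mathrm{Sp}_{2n}(\mathbb{Z})$ by $\mathbb{Z}$, which admits no finite quotient detecting a finite-index subgroup of the central $\mathbb{Z}$. Hence a proof cannot rest on formal properties of central extensions alone: it would have to exploit genuinely quandle-theoretic information about which groups arise as $\As(Q)$ for a finitely presented residually finite quandle $Q$ — for instance that the centre of every such group is separated by finite quotients, or that $\Ker(\psi_Q)$ splits off up to finite index — and at present no such input is available. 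I would also expect the preliminary step, deducing finite generation of $\Z\big(\As(Q)\big)$ from Theorem \ref{finitely-presented-quotient}, to require genuine work, since it presupposes finite presentability of $\As(Q)/\Z\big(\As(Q)\big)$, for which there is no obvious reason.
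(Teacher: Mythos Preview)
The statement you are addressing is Conjecture \ref{residually-finite-associated-group}: the paper offers no proof and explicitly leaves it open, so there is no argument in the paper to compare your proposal against. Your write-up is in fact not a proof either, and you acknowledge this yourself. After setting up the central extension and verifying that $\Inn(Q)$ is residually finite for residually finite $Q$ (a result already recorded in \cite[Theorem 4.3]{Bardakov-Singh-Singh} and invoked in the corollary following Theorem \ref{split-link-res-finite}), you correctly isolate the genuine obstruction: residual finiteness of kernel and quotient in a central extension does not imply residual finiteness of the total group, as Deligne's example witnesses, and the finite-index shortcut of Proposition \ref{ass-group-finite} is unavailable once $\Inn(Q)$ is infinite. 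Your further remark that even the finite generation of $\Z\big(\As(Q)\big)$ via Theorem \ref{finitely-presented-quotient} is not immediate is also fair, since one would first need finite presentability of $\As(Q)/\Z\big(\As(Q)\big)$. This is an accurate diagnosis of why the statement is posed as a conjecture rather than a theorem; there is nothing in the paper to set it against.
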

\vspace*{.6mm}

If the above conjecture is true, then by Theorem \ref{arbitrary-free-product-of-residually-finite-quandles}, a free product of finitely presented residually finite quandles is residually finite, which is an analogue of Theorem \ref{free-product-of-residually-finite-groups} for quandles.
\vspace*{.6mm}

\begin{ack}
Bardakov acknowledges support from the Russian Science Foundation project N 16-41-02006. Mahender Singh acknowledges support from INT/RUS/RSF/P-02 grant and SERB Matrics Grant MTR/2017/000018. Manpreet Singh thanks IISER Mohali for the PhD Research Fellowship.
\end{ack}
%%%%%%%%%%%%%%%%%%%%%%%%%%%%%%%%%%%%
%%%%%%%%%%%%%%%%%%%%%%%%%%%%%%%%%%%%

\end{document}